\documentclass[11pt,reqno]{amsart}

\usepackage[T1]{fontenc}
\usepackage{amsmath,amssymb,amsthm,mathtools}
\usepackage{booktabs}
\usepackage{graphicx}
\usepackage{float}
\usepackage{microtype}
\usepackage{enumitem}
\usepackage[colorlinks=true,linkcolor=blue,citecolor=blue,urlcolor=blue]{hyperref}
\usepackage[capitalize,noabbrev]{cleveref}
\usepackage{geometry}
\graphicspath{{figures/}}
\setlist{itemsep=2pt,topsep=4pt}

\newtheorem{theorem}{Theorem}[section]
\newtheorem{proposition}[theorem]{Proposition}
\newtheorem{corollary}[theorem]{Corollary}
\newtheorem{lemma}[theorem]{Lemma}
\newtheorem{definition}[theorem]{Definition}
\theoremstyle{remark}

\crefname{theorem}{Theorem}{Theorems}
\crefname{proposition}{Proposition}{Propositions}
\crefname{corollary}{Corollary}{Corollaries}
\crefname{lemma}{Lemma}{Lemmas}
\crefname{definition}{Definition}{Definitions}
\crefname{remark}{Remark}{Remarks}
\crefname{example}{Example}{Examples}

\newcommand{\C}{\mathbb C}
\newcommand{\R}{\mathbb R}
\newcommand{\Q}{\mathbb Q}
\newcommand{\Z}{\mathbb Z}
\newcommand{\PP}{\mathbb P}
\newcommand{\tr}{\operatorname{tr}}

\newcommand{\den}{\operatorname{den}}

\newcommand{\diag}{\operatorname{diag}}
\newcommand{\Log}{\operatorname{Log}}

\newcommand{\cA}{\mathcal A}

\newcommand{\cH}{\mathcal H}
\newcommand{\cR}{\mathcal R}
\newcommand{\cX}{\mathcal X}
\newcommand{\dd}{\,\mathrm d}

\title[Two-Channel Power-Law Spectral Curves]
{Algebraic Spectral Curves of Two-Channel Operator Pencils with Power-Law Response}

\author{Kejun Liu}
\address{State Key Laboratory of Bioinspired Interface Material Science, Soochow University, Suzhou 215123, China}
\email{kjliu@suda.edu.cn}
\date{July 31, 2026}

\subjclass[2020]{Primary 15A22, 14H52; Secondary 34A08, 47B50}
\keywords{operator pencils, power-law response, hyperelliptic curves, causal memory, conformal response, indefinite inner products}

\hypersetup{
  pdftitle={Algebraic Spectral Curves of Two-Channel Operator Pencils with Power-Law Response},
  pdfauthor={Kejun Liu},
  pdfsubject={Joint determinant curves of two-channel pencils with causal power-law response},
  pdfkeywords={operator pencils, power-law response, hyperelliptic curves, causal memory, conformal response}
}

\begin{document}

\setcounter{tocdepth}{1}

\begin{abstract}
A finite matrix can acquire a nontrivial algebraic spectral curve when its
entries contain a multivalued response.  We study this mechanism for a
two-channel operator pencil with a scalar power law $z^\beta$ and
$J$-self-adjoint coefficient matrices.  If $\beta=r/n$ is in lowest terms,
then, away from an explicit degenerate locus, the normalized determinant
curve is a degree-two cover of genus $n-1$.  It is rational for $n=1$,
elliptic for $n=2$, and hyperelliptic for $n\geq3$.  Within this fixed
two-channel class, the generic genus is fixed by the denominator of the
response exponent and is independent of its numerator.  The proof follows
from a birational reduction to a classical hyperelliptic normal form with a
cyclic automorphism.  The operator-specific datum is finer: the numerator
appears as the cyclic character of the distinguished coupling function and
controls its divisor and raw boundary singularity.  Here $\lambda$ varies as a
complexified coupling coordinate: the curve is the joint $(z,\lambda)$
determinant locus, not the frequency spectrum of one operator at fixed
physical coupling.

For an irrational exponent the local monodromy has infinite order, so no
finite-sheeted response cover can make the power meromorphic.  On a fixed
logarithmic branch and at fixed coupling, however, rational approximants
converge uniformly on compact sets, and isolated spectral zeros are stable.
Negative exponents in $(-1,0)$ arise from a causal Volterra kernel and as
locally uniform limits of cutoff continuum-bath Schur complements.  This
separates the finite number of probe channels from the nonlocal memory that
produces the branching.

As an application, the exact leading infrared monomial model associated with
a zero-temperature conformal response of dimension $h_{\rm probe}$ has, for
rational $2h_{\rm probe}$ and generic probe coefficients, genus
$\den(2h_{\rm probe})-1$.  For the fundamental fermion in the leading
large-$N$, strong-coupling, zero-temperature saddle of the even-$q>2$
Sachdev--Ye--Kitaev model, this becomes $q/2-1$.  This is a statement about
the joint determinant family of the monomial infrared model, not the global
curve of a completed response or a fixed-coupling physical spectrum.  At
denominator two the response fixes genus one but not the elliptic modulus.
\end{abstract}

\maketitle
\enlargethispage{2pt}
\tableofcontents

\section{Introduction}\label{sec:introduction}

Consider a two-channel system whose frequency-domain response contains a
square root.  There are only two internal channels, but analytic continuation
around the origin exchanges two response sheets.  For the elementary pencil
used below, the zero set of the determinant is not a rational two-level
dispersion relation.  Its smooth projective normalization is an elliptic
curve.  This is the observation from which the paper starts.

Nonlinear eigenvalue problems with algebraic matrix entries, including
square-root examples, are well established \cite{GuttelTisseur2017}, as are
self-adjoint pencils and two-parameter determinant functions in Krein
theory \cite{KollarMiller2014}.  Determinant loci are also standardly studied
as spectral curves, including singular and reducible cases
\cite{Izosimov2015}.  For a generic size-$N$, degree-$n$ matrix-polynomial
spectral curve, Vivolo's formula gives genus
$(N-1)(Nn-2)/2$, together with corrections for prescribed singularities
\cite{Vivolo2000}.  The two-channel value below is the $N=2$ member of
that classical count.  More specifically, hyperelliptic curves with cyclic
automorphisms have classical normal forms $Y^2=F(X^n)$.  Shaska's normal
decomposition already contains $Y^2=X^{2n}+aX^n+1$ with genus $n-1$
\cite{Shaska2003}; related automorphism classifications retain the broader
form $y^2=x^kh(x^n)$ \cite{GutierrezShaska2005,MuellerPink2022}.  Neither this
bare curve family nor its genus count is claimed as new here.  The point is
its exact emergence from a two-channel causal response pencil and the
resulting distinguished coupling coordinate, whose cyclic character, divisor,
and boundary singularity retain the exponent numerator.

It is natural at first to ask whether the elliptic curve is a peculiarity of
the half-order exponent.  Replacing that response by $z^{r/n}$ produces, for
a generic pencil within the same two-channel class, a curve of genus
\begin{equation}\label{eq:intro-genus}
 g=n-1,
\end{equation}
where $r/n$ is reduced.  The numerator changes how the spectral and coupling
coordinates are related in the original pencil, but disappears from the
birational model.  The denominator records the order of the response monodromy and
survives as the number of branch points.  The statement is specific to the
two-channel determinant: one channel gives a rational graph, while two
channels supply the quadratic extension used below.  Response monodromy and
channel dimension are separate pieces of the geometry.

The matrix family is explicit.  Put
\begin{equation}\label{eq:intro-pencil}
 J=\diag(1,-1),\qquad H_0=EJ,\qquad
 V=\begin{pmatrix}a&b\\-b&d\end{pmatrix},
\end{equation}
with real parameters and $E\ne0$, and consider
\begin{equation}\label{eq:intro-general-pencil}
 \cA_\beta(z;\lambda)
 =zI_2-EJ-\lambda z^\beta V.
\end{equation}
The matrices $H_0$ and $V$ are $J$-self-adjoint.  For a complex spectral
parameter the correct statement is a Schwarz symmetry, not pointwise
$J$-self-adjointness; this distinction is made precise in
\cref{sec:causal-response}.  The scalar $z^\beta$ supplies all the
multivaluedness.

Throughout the paper, $\lambda$ is the complexified coupling coordinate of
the pencil.  A physical problem may instead fix a real value $\lambda_0$ and
study the zeros in $z$ alone.  Such a fixed-coupling fiber is a zero divisor,
not the compact curve whose genus is computed here.  The determinant spectral
curve is the joint complexified locus in $(z,\lambda)$, after passage to the
response cover and normalization.

For $\beta=r/n\in\Q$ in lowest terms, set $z=u^n$.  The determinant equation
is birational, on the active locus $u\ne0$, to
\begin{equation}\label{eq:intro-curve}
 y^2=(\tau u^n+E\delta)^2
      -4\Delta_V(u^{2n}-E^2),
\end{equation}
where
\begin{equation}\label{eq:intro-invariants}
 \tau=a+d,\qquad \delta=a-d,\qquad
 \Delta_V=ad+b^2.
\end{equation}
On the explicit generic locus, the right-hand side has degree $2n$ and
$2n$ simple roots; the double cover of $\PP^1_u$ is unramified at infinity.
Riemann--Hurwitz then gives \eqref{eq:intro-genus}.  Degenerate coefficients
do not require a separate geometric guess: the genus is read from the
odd-multiplicity square-class part of the branch polynomial, while root
multiplicities retain the singularity type.

The first main result, \cref{thm:denominator-genus}, is therefore a genus
formula and a birational invariance statement.  All reduced exponents with
the same denominator give the same determinant function field after a
change of the $\lambda$ coordinate.  The second main result,
\cref{thm:irrational-monodromy}, identifies the sharp obstruction beyond
rational powers.  If $\beta$ is irrational, continuation around the origin
has infinite-order monodromy.  No finite response cover can turn $z^\beta$
into a meromorphic function.  This assertion retains the original spectral
and coupling coordinates; a transcendental coordinate change can hide the
response in an abstract algebraic conic.  The obstruction coexists with a
useful local fact: on a fixed logarithmic branch, rational approximants
converge on compact sets and preserve isolated spectral zeros at fixed
coupling.

The power law is not inserted only to manufacture curves.  For
$0<\alpha<1$, the causal kernel
\begin{equation}\label{eq:intro-memory-kernel}
 k_\alpha(t)=\frac{\theta(t)t^{-\alpha}}{\Gamma(1-\alpha)}
\end{equation}
has Laplace transform $z^{\alpha-1}$.  The same function is the Stieltjes
transform of the positive continuum measure
\begin{equation}\label{eq:intro-stieltjes}
 \frac{\sin(\pi\alpha)}{\pi}\omega^{\alpha-1}\dd\omega.
\end{equation}
It is thus obtained from a Volterra memory equation and as the locally
uniform limit of cutoff continuum-bath Schur complements.  The reduced
system is finite-dimensional; the continuum realization is not.  This is the
elementary reason a small matrix can retain a high-order branch structure.

A conformal response gives a conditional application of the genus formula.
At zero temperature, a retarded two-point function of
dimension $h_{\rm probe}$ may have the leading low-frequency form
\begin{equation}\label{eq:intro-conformal}
 G_R(\omega)\sim c_h[-i(\omega+i0)]^{2h_{\rm probe}-1}.
\end{equation}
Replace this asymptotic expression by its exact leading infrared monomial
model.  For a generic two-channel probe and rational $2h_{\rm probe}$, the
joint determinant curve of that model has
\begin{equation}\label{eq:intro-ads-genus}
 g_{\mathrm{probe}}=\den(2h_{\rm probe})-1.
\end{equation}
This does not assign a global genus to a response known only asymptotically.
It does not identify the finite-temperature Green function with a power law,
and it does not determine the full spectrum of a black hole or of the bath.
For the fundamental fermion of the interacting even-$q$ SYK conformal saddle,
$\Delta_\psi=1/q$, and \eqref{eq:intro-ads-genus} becomes $q/2-1$.  The
familiar $q=4$ exponent gives genus one for the leading monomial probe model,
but does not select any particular point of elliptic moduli.

The half-order layer also displays coefficient-dependent moduli.  The
binary-quartic $j$-invariant varies with the matrix entries, while the
specialization $a=b=d=E=1$ has the model
\begin{equation}\label{eq:intro-canonical-elliptic}
 Y^2=X^3+8X.
\end{equation}
The curve is a property of that parameter choice, not of the exponent alone.
The resolvents of the same pencil also admit a transparent inverse problem:
one may prescribe their trace and determinant and hence a quadratic reverse
characteristic polynomial.  When the prescribed coefficients are written
as $(a_p,p)$, the resulting identity has the form of an inverse local Euler
polynomial.  Both coefficients were supplied to the construction.  The
result is local spectral interpolation, not an arithmetic selection rule.

The paper follows the calculation rather than a taxonomy.  A concrete
elliptic example is developed in \cref{sec:two-channel}; the causal origin
and the operator class are fixed in \cref{sec:causal-response}.  Rational
exponents, the genus theorem, and its degenerate loci occupy
\cref{sec:rational-exponents,sec:genus}.  Irrational powers are treated in
\cref{sec:irrational}.  The elliptic and near-$AdS_2$ consequences appear in
\cref{sec:elliptic,sec:ads2}; local interpolation is kept until
\cref{sec:interpolation}, where its information flow can be stated without
ambiguity.

No operator for a global $L$-function is constructed here.  Nor do we derive
Schwarzian dynamics, black-hole entropy, temperature dependence, or a
complete quasinormal-mode spectrum.  The narrow result near black-hole
physics is \eqref{eq:intro-ads-genus}: for an exact leading conformal
monomial and generic two-channel coefficients, the reduced denominator of a
probe scaling dimension determines the topology of the corresponding joint
infrared determinant family.  No microscopic two-channel coupling or its
genericity is derived from a black-hole model here.

\section{A Two-Channel Model}\label{sec:two-channel}

We first keep only the example that exposes the geometry.  Take
\begin{equation}\label{eq:canonical-data}
 E=1,\qquad
 V_c=\begin{pmatrix}1&1\\-1&1\end{pmatrix},
 \qquad \beta=-\frac12.
\end{equation}
Choose a square root $z=u^2$, so that $z^{-1/2}=u^{-1}$.  The pencil becomes
\begin{equation}\label{eq:canonical-pencil}
 \cA_c(u;\lambda)
 =u^2I_2-J-\frac{\lambda}{u}V_c,\qquad u\ne0.
\end{equation}
A direct determinant expansion gives
\begin{equation}\label{eq:canonical-P}
 P_c(u,\lambda):=u^2\det\cA_c(u;\lambda)
 =u^6-u^2-2\lambda u^3+2\lambda^2.
\end{equation}
The factor $u^2$ clears the response pole and is not part of the active
operator domain.  Accordingly, the determinant curve means the smooth
projective normalization of the closure of $P_c=0$ with $u\ne0$.

Equation \eqref{eq:canonical-P} is quadratic in $\lambda$.  The useful
coordinate is not its raw square root but
\begin{equation}\label{eq:canonical-v}
 v=\frac{2\lambda}{u}-u^2.
\end{equation}
Substitution yields
\begin{equation}\label{eq:canonical-quartic}
 v^2=2-u^4,
\end{equation}
and the inverse transformation is
\begin{equation}\label{eq:canonical-lambda-inverse}
 \lambda=\frac{u}{2}(u^2+v).
\end{equation}
Both maps are rational on dense open sets.  Thus the determinant curve and
the quartic have the same function field over $\Q$.

\begin{proposition}[Half-order determinant curve]\label{prop:half-order-elliptic}
The smooth projective normalization of \eqref{eq:canonical-P} is a curve of
genus one.  It is isomorphic over $\Q$ to the smooth projective model of
\eqref{eq:canonical-quartic}.
\end{proposition}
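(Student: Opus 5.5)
The plan has two parts: establish a birational equivalence over $\Q$ between the affine curve $P_c=0$ (with $u\ne0$) and the quartic $v^2=2-u^4$, then compute the genus of the quartic directly.

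\textbf{Birational equivalence.} First I will check that \eqref{eq:canonical-v} carries $P_c=0$ into \eqref{eq:canonical-quartic}. Complete the square in $\lambda$:
\[
2\lambda^2-2\lambda u^3=\tfrac12(2\lambda-u^3)^2-\tfrac12u^6 .
\]
Hence $2P_c=(2\lambda-u^3)^2+u^6-2u^2$. Since $2\lambda-u^3=uv$, on $u\ne0$ this gives
\[
2P_c=u^2\bigl(v^2+u^4-2\bigr).
\]
So for $u\neq0$, $P_c=0$ is equivalent to $v^2=2-u^4$. The maps $(u,\lambda)\mapsto(u,2\lambda/u-u^2)$ and $(u,v)\mapsto(u,\tfrac u2(u^2+v))$ are defined over $\Q$ on the open sets $u\ne0$ and are mutually inverse there. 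The locus $u=0$ is finite on both curves and is discarded by the convention that the active domain is $u\ne0$.

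\textbf{Irreducibility.} Both curves must be irreducible for function fields to make sense. I will argue this for the quartic: $2-u^4$ is not a square in $\overline{\Q}(u)$, since it has simple roots. Therefore $\Q(u,v)$ is a quadratic field extension of $\Q(u)$, and the curve is geometrically irreducible. By the birational maps, the closure of $\{P_c=0,\ u\ne0\}$ is irreducible with the same function field.

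\textbf{Genus and isomorphism of models.} The quartic $2-u^4$ has four distinct roots $2^{1/4}i^k$ and even degree. The double cover $v^2=2-u^4$ of $\PP^1_u$ is therefore branched exactly at these four points and unramified at infinity. Riemann--Hurwitz gives $2g-2=2(-2)+4$, so $g=1$. Smooth projective curves over $\Q$ are determined up to isomorphism by their function fields over $\Q$. Hence the normalization of $P_c=0$ is isomorphic over $\Q$ to the smooth projective model of \eqref{eq:canonical-quartic}, and its genus is one.

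\textbf{Main obstacle.} The only real subtlety is being careful at the excised point $u=0$ and at infinity. Both are handled by working with function fields rather than point sets: normalization and the smooth projective model depend only on $\Q(u,v)$. No rational point is needed for genus one. The proposition claims isomorphism with the quartic model, not with a Weierstrass cubic. Note that rational points do exist, e.g.\ $(1,\pm1)$, so a Weierstrass form is available anyway.
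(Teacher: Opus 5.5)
Your proposal is correct and follows the paper's proof. It uses the same birational substitution $v=2\lambda/u-u^2$ with inverse $\lambda=\tfrac u2(u^2+v)$, then applies Riemann--Hurwitz to the double cover of $\PP^1_u$, branched at the four roots of $u^4=2$ and unramified at infinity; your identity $2P_c=u^2(v^2+u^4-2)$ and geometric-irreducibility check just make explicit what the paper leaves to direct substitution.
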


\begin{proof}
The birational equivalence follows from
\eqref{eq:canonical-v}--\eqref{eq:canonical-lambda-inverse}.  The projection
of \eqref{eq:canonical-quartic} to the $u$-line is a double cover.  Its finite
branch divisor consists of the four simple roots of $u^4=2$.  Since the
polynomial has even degree four, the two points above infinity are
unramified.  Riemann--Hurwitz gives
\[
 2g-2=2(-2)+4=0.
\]
Hence $g=1$.  The quartic has the rational point $(u,v)=(1,1)$, so after
choosing that point it is an elliptic curve over $\Q$.
\end{proof}

The four branch points are
\begin{equation}\label{eq:canonical-branch-points}
 2^{1/4},\quad -2^{1/4},\quad i2^{1/4},\quad -i2^{1/4}.
\end{equation}
\begin{figure}[H]
\centering
\includegraphics[width=\textwidth]{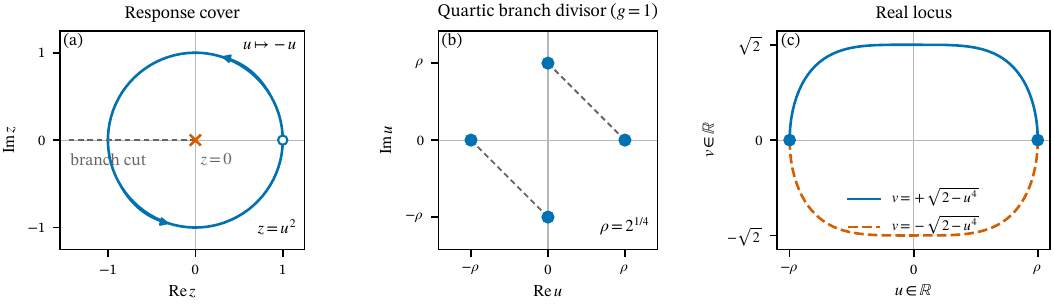}
\caption{The half-order example.  The response cover $z=u^2$ is rational
and has the deck transformation $u\mapsto-u$ (left).  The determinant
equation adds a double cover branched at the four roots of $u^4=2$ (middle),
whose real locus is shown at right.  Genus one belongs to the normalized
determinant curve, not to the response cover.}
\label{fig:half-order-geometry}
\end{figure}
Their cross-ratio is $-1$ up to the usual six cross-ratio transforms.  This
already suggests $j=1728$, although the full $j$-map will be derived from
the general quartic in \cref{sec:elliptic}.  Here the more important point
is structural: no entry of \eqref{eq:canonical-pencil} is a differential
operator, and there are only two channels.  The elliptic curve is generated
by the two-sheeted response and the additional quadratic determinant
equation.

The same calculation also warns against identifying three different covers.
The response cover $u\mapsto z=u^2$ has genus zero.  The matrix pencil is a
meromorphic family on that cover.  Its determinant equation adds a second
sheet in $\lambda$, and it is the normalization of this last curve that has
genus one.  We retain this distinction throughout.

The exponent $-1/2$ could look specially tuned.  To find out which part of
the answer is special, one should not begin by changing the matrix size.  It
is enough to replace the response by $z^{r/n}$.  Before doing so, we derive
where such powers occur in a causal problem.

\section{Causal Power-Law Response}\label{sec:causal-response}

\subsection{Volterra Realization}

Causal convolution equations and their fractional kernels have a standard
Laplace--Stieltjes formulation \cite{Kochubei2011}.  We record the particular
normalization needed for the determinant geometry.

Let $x(t)\in\C^N$ vanish for $t<0$ and consider the linear Volterra equation
\begin{equation}\label{eq:volterra}
 \dot x(t)-H_0x(t)
 -\lambda\int_0^t k(t-s)Vx(s)\dd s=f(t),
 \qquad x(0)=0.
\end{equation}
For $\Re z$ sufficiently large, its Laplace transform is
\begin{equation}\label{eq:volterra-laplace}
 \bigl[zI-H_0-\lambda\widehat k(z)V\bigr]\widehat x(z)
 =\widehat f(z).
\end{equation}
Thus a frequency-dependent matrix pencil is the exact reduced operator of a
causal memory equation.

Our transform conventions are fixed here.  For a causal distribution $k$ of
exponential type,
\begin{equation}\label{eq:transform-conventions}
 \widehat k(z)=\int_0^\infty e^{-zt}k(t)\dd t,
 \qquad
 \widehat k^R(\omega)
 =\lim_{\eta\downarrow0}\widehat k(\eta-i\omega),
\end{equation}
whenever the boundary value exists.  Thus $z=\eta-i\omega$ and our Fourier
sign is $e^{+i\omega t}$.  The advanced boundary value is obtained from
$z=\eta+i\omega$ for a real kernel.

For $0<\alpha<1$, set
\begin{equation}\label{eq:k-alpha}
 k_\alpha(t)=\frac{\theta(t)t^{-\alpha}}{\Gamma(1-\alpha)}.
\end{equation}
The singularity at the origin is locally integrable, and for $\Re z>0$
\begin{equation}\label{eq:laplace-power}
 \widehat k_\alpha(z)
 =\frac1{\Gamma(1-\alpha)}
   \int_0^\infty e^{-zt}t^{-\alpha}\dd t
 =z^{\alpha-1},
\end{equation}
with the principal logarithm.  Therefore the range
\begin{equation}\label{eq:negative-beta-range}
 -1<\beta<0,\qquad \beta=\alpha-1,
\end{equation}
is realized by an ordinary locally integrable causal kernel.  Positive
noninteger powers can be treated as fractional derivatives or as subtracted
self-energies, but they do not follow from \eqref{eq:k-alpha} without this
additional interpretation.  The algebraic results below allow arbitrary
real $\beta$; the direct long-memory realization is \eqref{eq:negative-beta-range}.

\subsection{Continuum-Bath Realization}

The same response has a Stieltjes representation.  For $0<\alpha<1$ and
$z\in\C\setminus(-\infty,0]$,
\begin{equation}\label{eq:stieltjes-power}
 \int_0^\infty\frac{\omega^{\alpha-1}}{z+\omega}\dd\omega
 =\frac{\pi}{\sin(\pi\alpha)}z^{\alpha-1}.
\end{equation}
Indeed, the substitution $\omega=zt$ proves the formula first for positive
real $z$ by the Euler beta integral; analytic continuation gives the stated
slit-plane identity.  Convergence at zero requires $\alpha>0$, while
convergence at infinity requires $\alpha<1$.

Define the positive measure
\begin{equation}\label{eq:bath-measure}
 \dd\mu_\alpha(\omega)
 =\frac{\sin(\pi\alpha)}{\pi}
  \omega^{\alpha-1}\dd\omega.
\end{equation}
Then
\begin{equation}\label{eq:bath-m}
 m_\alpha(z):=\int_0^\infty\frac{\dd\mu_\alpha(\omega)}{z+\omega}
 =z^{\alpha-1}.
\end{equation}
This identity is the continuum-bath origin of the branch cut.

With \eqref{eq:transform-conventions}, its retarded boundary value is
\begin{equation}\label{eq:retarded-power-boundary}
 m_\alpha^R(\omega)
 =[-i(\omega+i0)]^{\alpha-1}
 =|\omega|^{\alpha-1}
 \begin{cases}
 e^{i\pi(1-\alpha)/2},&\omega>0,\\
 e^{-i\pi(1-\alpha)/2},&\omega<0.
 \end{cases}
\end{equation}
The advanced value is its complex conjugate.  Some Green-function conventions
include an additional overall factor $-i$; this changes the displayed phase
and the corresponding spectral-density convention, but not the response
exponent or its monodromy.

For completeness, every cutoff response is a literal Schur complement.  This
is a concrete instance of the Feshbach--Schur reduction, whose general
isospectral form produces finite-dimensional energy-dependent effective
operators \cite{DussonSigalStamm2021}.
Let $D_R$ be multiplication by $\omega$ on
$L^2([R^{-1},R],\mu_\alpha)\otimes\C^N$, and let
$B_Rx(\omega)=x$ and
\begin{equation}\label{eq:C-R}
 C_Rg=V\int_{R^{-1}}^R g(\omega)\dd\mu_\alpha(\omega).
\end{equation}
Both maps are bounded because the truncated measure is finite.  The Schur
complement of the lower-right block in
\begin{equation}\label{eq:block-pencil}
 \begin{pmatrix}
 zI-H_0&-\lambda C_R\\
 -B_R&z+D_R
 \end{pmatrix}
\end{equation}
is
\begin{equation}\label{eq:truncated-schur}
 zI-H_0-\lambda m_{\alpha,R}(z)V,
 \qquad
 m_{\alpha,R}(z)=
 \int_{R^{-1}}^R\frac{\dd\mu_\alpha(\omega)}{z+\omega}.
\end{equation}
The maps $B_R$ and $C_R$ give an algebraic left--right realization; for a
generic indefinite $V$ they are not Hilbert-space adjoints.  There is,
however, a reciprocal finite-cutoff realization in a Krein bath.  If
$V^{[*]}=V$, then $JV$ is Hermitian and admits a factorization
\begin{equation}\label{eq:krein-factorization}
 JV=C^*J_bC,
 \qquad
 V=C^{[*]}C,
 \qquad C^{[*]}=JC^*J_b,
\end{equation}
where $J_b$ is the signature of the nonzero spectral part of $JV$.  On
$L^2([R^{-1},R],\mu_\alpha)\otimes\mathcal F$, let $D_R$ again denote
multiplication by $\omega$ and put $W_Rx(\omega)=Cx$.  For real
$\lambda\geq0$, the block generator
\begin{equation}\label{eq:krein-bath-generator}
 \mathbb L_R=
 \begin{pmatrix}
  H_0&\sqrt\lambda\,W_R^{[*]}\\
  \sqrt\lambda\,W_R&-D_R
 \end{pmatrix}
\end{equation}
is self-adjoint for the product signature $J\oplus J_b$, and the upper Schur
complement of $zI-\mathbb L_R$ is again \eqref{eq:truncated-schur}.  Thus the
finite-cutoff response has a reciprocal realization once an indefinite bath
signature is allowed.  This statement asserts neither Hilbert-space
passivity nor self-adjointness of the parent block for complex $\lambda$.

On every compact subset of the slit plane,
$m_{\alpha,R}\to m_\alpha$ locally uniformly.  The limiting self-energy is
therefore the exact Stieltjes function \eqref{eq:bath-m}; the cutoff argument
also makes clear where the infinite bath has entered.  The uncut measure has
infinite total mass, so its constant coupling vector is not an element of the
bath Hilbert space.  What is proved here is the locally uniform limit of the
cutoff Schur complements.  An uncut parent-operator realization would require
a separate singular or form-coupling construction, including its domain and
closedness theory; no such construction is asserted here.  We also do not
assert that the block matrix \eqref{eq:block-pencil} is
Hilbert-self-adjoint for an arbitrary indefinite $V$.  Its role is the exact
algebraic elimination of truncated continuum channels and their locally
uniform Schur-complement limit.

\subsection{Operator Class}

We now fix the terminology used in the rest of the paper.  Let
$(\cX,J)$ be a finite-dimensional Krein space, so $J=J^*=J^{-1}$, and write
\begin{equation}\label{eq:j-adjoint}
 T^{[*]}=JT^*J.
\end{equation}

\begin{definition}[Riemann-surface causal operator pencil]\label{def:rsco}
Let $k$ be a real causal distribution of exponential type whose Laplace
transform is holomorphic in a right half-plane.  Let
$\pi_\Sigma:\cR_\Sigma\to\Omega_z$ be a specified analytic-continuation
surface, possibly infinite-sheeted, equipped with an antiholomorphic
involution $\iota$ above complex conjugation:
\begin{equation}\label{eq:real-structure}
 \pi_\Sigma(\iota p)=\overline{\pi_\Sigma(p)}.
\end{equation}
Let $\widetilde\Sigma$ be the single-valued meromorphic continuation of the
Laplace transform to $\cR_\Sigma$.  Suppose:
\begin{enumerate}[label=\textnormal{(\roman*)}]
\item $H_0^{[*]}=H_0$ and $V^{[*]}=V$ on $(\cX,J)$;
\item $\widetilde\Sigma$ agrees with the Laplace transform on the physical
sheet;
\item $\widetilde\Sigma(\iota p)
=\overline{\widetilde\Sigma(p)}$ whenever both sides are defined.
\end{enumerate}
The meromorphic family, with $\lambda\in\C$ as a complexified coupling
coordinate,
\begin{equation}\label{eq:rsco-definition}
 \cA(p;\lambda)=\pi_\Sigma(p)I-H_0
 -\lambda\widetilde\Sigma(p)V
\end{equation}
will be called a \emph{Riemann-surface causal operator pencil}, or an
\emph{RSCO pencil}.
\end{definition}

The word ``causal'' refers to the support of the time-domain kernel.  ``Riemann
surface'' refers to the single-valued analytic continuation of its scalar
response.  ``Operator pencil'' refers to the finite-channel family
\eqref{eq:rsco-definition}.  The causal kernel, scalar response, response
surface, operator pencil, and determinant curve are distinct objects.  In
particular, finite-dimensional channel space does not make the Volterra
dynamics local in time.

We use ``determinant spectral curve'' in the established matrix-pencil sense
\cite{Izosimov2015}, with the additional normalization and pole saturation
needed for the present multivalued response.

\begin{definition}[Determinant spectral curve]\label{def:spectral-curve}
For a finite algebraic response cover, the determinant spectral curve is the
smooth projective normalization of the reduced closure of
\begin{equation}\label{eq:determinant-locus}
 \{(p,\lambda)\in\cR_\Sigma\times\C:
   \det\cA(p;\lambda)=0\},
\end{equation}
after components introduced only by clearing poles have been removed, or
equivalently after saturation by the pole divisor.  If the locus is
reducible, its normalized components are recorded separately.
\end{definition}

The response cover and the determinant curve need not have the same genus.
In \cref{sec:two-channel}, the cover $u\mapsto u^2$ is rational while the
determinant curve is elliptic.

Nor is the determinant curve the spectrum of a fixed-coupling operator.  A
choice $\lambda=\lambda_0$ cuts \eqref{eq:determinant-locus} by a fiber and
leaves a zero divisor in the spectral variable.  The genus below belongs to
the joint complexified $(p,\lambda)$ locus.  Varying $\lambda$ is therefore
part of the definition of the curve, even when a real slice later interprets
$\lambda$ as a physical coupling.

Finally, coefficientwise $J$-self-adjointness must not be confused with a
pointwise assertion at complex parameters.  With the involution above,
\begin{equation}\label{eq:schwarz-symmetry}
 \cA(p;\lambda)^{[*]}
 =\cA(\iota p;\overline\lambda).
\end{equation}
Only at fixed points of the real structure and for real $\lambda$ is the
matrix itself $J$-self-adjoint.  The later algebraic geometry is over $\C$;
the symmetry is retained as \eqref{eq:schwarz-symmetry} rather than silently
extended beyond its domain.  On a real-frequency cut it pairs retarded and
advanced boundary values, $\cA^R(\omega)^{[*]}=\cA^A(\omega)$.

\section{Rational Response Exponents}\label{sec:rational-exponents}

Let $\beta=r/n$ with $r\in\Z$, $n\ge1$, and $\gcd(r,n)=1$.  The
minimal cyclic cover on which the response is single-valued is
\begin{equation}\label{eq:rational-cover}
 z=u^n,\qquad z^\beta=u^r.
\end{equation}
The point $u=0$ may be a zero or a pole of the lifted response.  We call
$u\ne0$ the active locus; all birational statements below are made there
before compactification.

We now restrict the RSCO definition in \cref{def:rsco} to the explicit
two-channel family \eqref{eq:intro-general-pencil}; no genus claim is made for
the general operator class.

Take the two-channel data \eqref{eq:intro-pencil} and retain the invariants
\begin{equation}\label{eq:invariants-repeat}
 \tau=a+d,\qquad \delta=a-d,\qquad \Delta_V=ad+b^2.
\end{equation}

\begin{lemma}[Raw determinant]\label{lem:raw-determinant}
On the cover \eqref{eq:rational-cover},
\begin{align}
 F_{r,n}(u,\lambda)
 &:=
 \det\cA_{r/n}(u^n;\lambda)\notag\\
 &=u^{2n}-E^2
 -\lambda u^r(\tau u^n+E\delta)
 +\Delta_V\lambda^2u^{2r}.
 \label{eq:raw-determinant}
\end{align}
\end{lemma}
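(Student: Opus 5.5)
The plan is to substitute the lift \eqref{eq:rational-cover} into the pencil \eqref{eq:intro-general-pencil} and expand the resulting $2\times2$ determinant directly. On the active locus $u\ne0$ we have $z=u^n$ and $z^\beta=u^r$, so, writing $w=\lambda u^r$,
\[
 \cA_{r/n}(u^n;\lambda)
 =\begin{pmatrix} u^n-E-wa & -wb\\ wb & u^n+E-wd\end{pmatrix},
\]
using $EJ=\diag(E,-E)$ and the entries of $V$ from \eqref{eq:intro-pencil}. The determinant is the product of the diagonal entries minus the product of the off-diagonal ones.

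Next I expand the diagonal product as
\[
 (u^n-E-wa)(u^n+E-wd)=(u^{2n}-E^2)-w\bigl[a(u^n+E)+d(u^n-E)\bigr]+w^2ad.
\]
The bracket regroups as $(a+d)u^n+E(a-d)=\tau u^n+E\delta$ by \eqref{eq:invariants-repeat}. The off-diagonal product is $(-wb)(wb)=-w^2b^2$; subtracting it gives $+w^2b^2$, which combines with $w^2ad$ into $\Delta_V w^2$. Substituting $w=\lambda u^r$ then yields \eqref{eq:raw-determinant}.

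There is no genuine obstacle in this argument. The only points needing care are signs. The sign of $b$ in $V$ is what makes the off-diagonal contribution add, so that $\Delta_V=ad+b^2$ rather than $\det V=ad+b^2$ appearing by accident with the wrong sign; in fact the two coincide here. The assignment of $E$ versus $-E$ to the $a$ and $d$ entries determines the sign of $\delta$. I would also note that for $r<0$ the expression is a Laurent polynomial in $u$, which is why the identity is asserted only on the active locus $u\ne0$. The saturation by powers of $u$ is deferred to the later reduction.
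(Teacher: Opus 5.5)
Your proposal is correct and is essentially the paper's own proof: lift to $z=u^n$, $z^{r/n}=u^r$, write the $2\times2$ matrix, and collect the terms linear and quadratic in $\lambda u^r$ to obtain $\tau u^n+E\delta$ and $ad+b^2=\Delta_V$. Your closing sign remark is garbled as written, since $\Delta_V$ and $\det V$ are both equal to $ad+b^2$ here and there is no competing sign to guard against, but this does not affect the computation.
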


\begin{proof}
The lifted matrix is
\[
 \begin{pmatrix}
 u^n-E-a\lambda u^r&-b\lambda u^r\\
 b\lambda u^r&u^n+E-d\lambda u^r
 \end{pmatrix}.
\]
Expanding its determinant and collecting the terms linear in $\lambda$
gives
\[
 d(u^n-E)+a(u^n+E)=\tau u^n+E\delta,
\]
while the quadratic coefficient is $ad+b^2=\Delta_V$.
\end{proof}

The determinant is quadratic in $\lambda$ for every exponent.  Its
discriminant contains the square $u^{2r}$.  Removing this coordinate square
is exactly the step that exposes the curve.

It is useful to separate that step from completing the square.  On the active
locus set
\begin{equation}\label{eq:q-coordinate}
 q=\lambda u^r.
\end{equation}
Then \eqref{eq:raw-determinant} becomes
\begin{equation}\label{eq:q-conic}
 \Delta_Vq^2-(\tau u^n+E\delta)q+u^{2n}-E^2=0.
\end{equation}
This equation already shows why the numerator disappears from the function
field, while the original coordinate $\lambda=u^{-r}q$ still remembers it.

\begin{proposition}[Birational reduction]\label{prop:birational-reduction}
Assume $\Delta_V\ne0$.  On $u\ne0$, the determinant curve
$F_{r,n}=0$ is birational to
\begin{equation}\label{eq:Qn-curve}
 \cH_n:\qquad y^2=Q_n(u),
\end{equation}
where
\begin{equation}\label{eq:Qn}
 Q_n(u)=(\tau u^n+E\delta)^2
        -4\Delta_V(u^{2n}-E^2).
\end{equation}
The maps are
\begin{align}
 y&=2\Delta_V\lambda u^r-(\tau u^n+E\delta),
 \label{eq:y-map}\\
 \lambda&=
 \frac{y+\tau u^n+E\delta}{2\Delta_Vu^r}.
 \label{eq:lambda-map}
\end{align}
\end{proposition}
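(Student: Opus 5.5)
The plan is to work entirely on the active locus $u\ne0$, where $u^r$ is a unit for every $r\in\Z$. The proof passes through the intermediate conic \eqref{eq:q-conic}, so it has two elementary birational steps.

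\emph{Step one: the coordinate $q$.} The substitution $q=\lambda u^r$ of \eqref{eq:q-coordinate} is an isomorphism of the open sets $\{u\ne0\}\subset\C^2_{(u,\lambda)}$ and $\{u\ne0\}\subset\C^2_{(u,q)}$. Its inverse is $\lambda=u^{-r}q$, and both maps are regular there. By \cref{lem:raw-determinant}, $F_{r,n}(u,\lambda)$ equals the left side of \eqref{eq:q-conic} evaluated at $q=\lambda u^r$. So the zero loci correspond exactly, with no extra factors: the identity is an equality of polynomials in $(u,q)$ after substitution, and no power of $u$ has been cleared.

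\emph{Step two: completing the square.} Because $\Delta_V\ne0$, multiply \eqref{eq:q-conic} by $4\Delta_V$ and put $y=2\Delta_Vq-(\tau u^n+E\delta)$. This gives
\[
 y^2-(\tau u^n+E\delta)^2+4\Delta_V(u^{2n}-E^2)=4\Delta_V\cdot(\text{left side of \eqref{eq:q-conic}}),
\]
which is routine to expand. Hence the zero set of \eqref{eq:q-conic} equals the zero set of $y^2-Q_n(u)$. The map $(u,q)\mapsto(u,y)$ is affine-linear in $q$ with invertible leading coefficient $2\Delta_V$, so it is an isomorphism of $\C^2$ that preserves $\{u\ne0\}$. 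Composing the two steps yields \eqref{eq:y-map}. Its inverse is $q=(y+\tau u^n+E\delta)/(2\Delta_V)$, $\lambda=u^{-r}q$, which is \eqref{eq:lambda-map}.

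\emph{Conclusion.} The composite is an isomorphism between the affine loci $\{F_{r,n}=0,\ u\ne0\}$ and $\{y^2=Q_n(u),\ u\ne0\}$. Removing the closed set $u=0$ changes neither the function field nor the normalized components. Both curves therefore have the same function field $\C(u,y)$, and the same holds over $\Q(a,b,d,E)$. This holds componentwise if $Q_n$ happens to be a square and the locus is reducible.

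I expect no step to be a genuine obstacle. The only point needing care is bookkeeping: confirm that $u^r$ is invertible on $u\ne0$ for negative $r$ as well, and that the $u=0$ fiber is excluded rather than being an extra component. This is consistent with the pole saturation in \cref{def:spectral-curve}.
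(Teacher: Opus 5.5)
Your proof is correct and is essentially the paper's argument. The paper expands the single polynomial identity $[2\Delta_V\lambda u^r-(\tau u^n+E\delta)]^2-Q_n(u)=4\Delta_V F_{r,n}(u,\lambda)$ directly in $(u,\lambda)$ and notes that \eqref{eq:lambda-map} inverts \eqref{eq:y-map} wherever $\Delta_V u^r\ne0$; your two steps through $q=\lambda u^r$ are that identity factored, and they make explicit the slightly stronger fact that the active loci over $u\ne0$ are isomorphic.
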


\begin{proof}
Direct expansion, without use of the determinant equation, gives
\begin{equation}\label{eq:birational-identity}
 \bigl[2\Delta_V\lambda u^r-(\tau u^n+E\delta)\bigr]^2
 -Q_n(u)=4\Delta_VF_{r,n}(u,\lambda).
\end{equation}
Thus \eqref{eq:y-map} maps the determinant locus into \eqref{eq:Qn-curve}.
Equation \eqref{eq:lambda-map} is its rational inverse wherever
$\Delta_Vu^r\ne0$.
\end{proof}

The right-hand side of \eqref{eq:Qn} is
\begin{equation}\label{eq:Qn-ABC}
 Q_n(u)=Au^{2n}+Bu^n+C,
\end{equation}
with
\begin{equation}\label{eq:ABC}
 A=\tau^2-4\Delta_V,\qquad
 B=2E\tau\delta,\qquad
 C=E^2(\delta^2+4\Delta_V).
\end{equation}
Neither $r$ nor its sign appears.  This is stronger than a degree count.
When $AC\ne0$, scaling $u$ and $y$ puts \eqref{eq:Qn-ABC} in the classical
cyclic-automorphism form $Y^2=X^{2n}+aX^n+1$ \cite{Shaska2003}.  The new issue
for the pencil is therefore not the existence of that hyperelliptic normal
form, but which response data survive or disappear under the reduction.

\begin{corollary}[Numerator invariance]\label{cor:numerator-invariance}
Fix $n$, $E$, and $V$ with $\Delta_V\ne0$.  For any two integers $r_1,r_2$
coprime to $n$, suppose the common double-cover model is geometrically
integral.  Then the active determinant curves of the exponents $r_1/n$ and
$r_2/n$ have isomorphic function fields.  An isomorphism is obtained by using
the common coordinates $(u,y)$ of \eqref{eq:y-map}.
\end{corollary}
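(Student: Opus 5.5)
The plan is to compose the two birational maps furnished by \cref{prop:birational-reduction}, one for each exponent, through the common model $\cH_n$. The key observation, already recorded after \eqref{eq:ABC}, is that $Q_n$ depends only on $n$, $E$, and $V$, through $A$, $B$, $C$, and not on $r$. Hence for $r_1$ and $r_2$ coprime to $n$ the two active determinant curves $F_{r_1,n}=0$ and $F_{r_2,n}=0$ map to the \emph{same} affine curve $\cH_n: y^2=Q_n(u)$. The isomorphism of function fields will be the composite of one birational map with the inverse of the other.

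First, I will identify the function field of each active determinant curve with a field generated over $\C(u)$. On the active locus $u\ne0$, the substitution $q=\lambda u^{r}$ of \eqref{eq:q-coordinate} is invertible, since $\lambda=u^{-r}q$ and $u^{\pm r}\in\C(u)^\times$. Therefore
\[
\C(u,\lambda)=\C(u,q),
\]
where the right-hand side is the function field of the conic \eqref{eq:q-conic}, which is independent of $r$. Next, \eqref{eq:y-map} gives $y=2\Delta_V q-(\tau u^n+E\delta)$. Because $\Delta_V\ne0$, this is an affine change of variable over $\C(u)$, so $\C(u,q)=\C(u,y)$.

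The identity \eqref{eq:birational-identity} shows that $y^2=Q_n(u)$ holds in the function field of the curve $F_{r_i,n}=0$. This gives a surjective homomorphism
\[
\C[u,u^{-1},y]/(y^2-Q_n)\longrightarrow \C[u,u^{-1},\lambda]/(F_{r_i,n}).
\]
Its inverse is induced by \eqref{eq:lambda-map}, so the homomorphism is an isomorphism of localized coordinate rings. Passing to fraction fields then gives
\[
K_{r_1}\cong \C(\cH_n)\cong K_{r_2}.
\]
Explicitly, the composite sends $\lambda_1\mapsto u^{r_2-r_1}\lambda_2$, fixes $u$, and fixes $y$.

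The main obstacle is making sense of the phrase ``function field'' at all, that is, ensuring that each of these rings is an integral domain. This is exactly the role of the hypothesis that $\cH_n$ is geometrically integral, equivalently that $Q_n$ is not a square in $\C[u]$. I will transport integrality through the ring isomorphism above: $F_{r_i,n}$ then defines an integral curve on $u\ne0$, and no pole-clearing components arise, because we work on the active locus as \cref{def:spectral-curve} requires. If $Q_n$ were a square, both loci would split into two components, and the statement would instead apply componentwise. That case is excluded by the hypothesis.
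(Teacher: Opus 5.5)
Your proposal is correct and matches the paper's argument. The paper reads the corollary off from \cref{prop:birational-reduction} and the observation that $Q_n$ in \eqref{eq:Qn-ABC}--\eqref{eq:ABC} does not involve $r$, so the composite through the common $\cH_n$ fixes $(u,y)$ and sends $\lambda_{r_1}\mapsto u^{r_2-r_1}\lambda_{r_2}$. Your localized-ring version, in which $y^2-Q_n\mapsto 4\Delta_V F_{r,n}$ under the isomorphism $\C[u,u^{-1},y]\cong\C[u,u^{-1},\lambda]$ given by \eqref{eq:y-map}, simply spells out the transfer of integrality that the paper leaves implicit.
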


\begin{proposition}[Coupling character]\label{prop:coupling-character}
Let $\zeta_n=e^{2\pi i/n}$ and assume the common double-cover model in
\cref{cor:numerator-invariance} is geometrically integral.  The map
\begin{equation}\label{eq:cyclic-automorphism}
 \sigma(u,y)=(\zeta_nu,y)
\end{equation}
is an automorphism of its normalization.  For exponent $r/n$, the distinguished
coupling function obtained from \eqref{eq:lambda-map} satisfies
\begin{equation}\label{eq:coupling-character}
 \sigma^*\lambda_r=\zeta_n^{-r}\lambda_r.
\end{equation}
Equivalently, on the raw active determinant locus,
\begin{equation}\label{eq:raw-cyclic-action}
 (u,\lambda)\longmapsto(\zeta_nu,\zeta_n^{-r}\lambda).
\end{equation}
For $n>1$, coprimality makes this a primitive character of the cyclic response
group.
\end{proposition}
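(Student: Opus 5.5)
The plan is to verify everything on the affine model $\cH_n: y^2=Q_n(u)$ and then pass to the normalization. Since $Q_n(u)=Au^{2n}+Bu^n+C$ depends on $u$ only through $u^n$, we have $Q_n(\zeta_n u)=Q_n(u)$. Hence $\sigma(u,y)=(\zeta_n u,y)$ preserves the affine curve and is a polynomial automorphism of it, with inverse $(u,y)\mapsto(\zeta_n^{-1}u,y)$. Under the geometric-integrality hypothesis of \cref{cor:numerator-invariance}, $\sigma$ induces a $\C$-automorphism of the function field $\C(u,y)$. By the uniqueness of the smooth projective model of a function field of transcendence degree one, it extends to an automorphism of the normalization. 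Equivalently, a birational self-map of a smooth projective curve is regular. This settles the first claim.

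For the character, I would take $\lambda_r=(y+\tau u^n+E\delta)/(2\Delta_V u^r)$ from \eqref{eq:lambda-map} as an element of the function field. Then
\[
\sigma^*\lambda_r=\frac{y+\tau\zeta_n^n u^n+E\delta}{2\Delta_V\zeta_n^r u^r}=\zeta_n^{-r}\lambda_r,
\]
because $\zeta_n^n=1$, which is \eqref{eq:coupling-character}. To get the raw form \eqref{eq:raw-cyclic-action}, I would check directly from \eqref{eq:raw-determinant} that $F_{r,n}(\zeta_n u,\zeta_n^{-r}\lambda)=F_{r,n}(u,\lambda)$. Each term involves $u$ only through $u^n$ or through the product $\lambda u^r$, and $\lambda u^r=q$ is invariant by \eqref{eq:q-coordinate}. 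So the map preserves the active locus $u\neq0$. Via \eqref{eq:y-map} it corresponds to $\sigma$, since $y$ depends only on $q$ and $u^n$.

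For primitivity, $\sigma$ generates the cyclic group $\langle\sigma\rangle\cong\Z/n$ acting on $u$, which is the deck group of the response cover $z=u^n$. The character $\sigma\mapsto\zeta_n^{-r}$ has order $n/\gcd(r,n)=n$ when $\gcd(r,n)=1$, so it is primitive (faithful) for $n>1$.

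The only step needing care is the first: making sure $\sigma$ acts on the normalization and not merely on the affine chart. I expect this to be a formality given integrality, but there is one subtlety to flag. If $\sigma$ had finite order $n$ only as a map on $u$ while being trivial on the function field, the character statement would become vacuous. This cannot happen, because $\sigma^*u=\zeta_n u\neq u$ for $n>1$.
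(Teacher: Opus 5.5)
Your proposal is correct and follows the paper's proof: $Q_n$ is invariant under $u\mapsto\zeta_n u$, the denominator of \eqref{eq:lambda-map} picks up the factor $\zeta_n^r$, and each term of $F_{r,n}$ is invariant because $q=\lambda u^r$ is. You also make explicit two points the paper leaves implicit: the extension of $\sigma$ to the normalization through the function field, and the order computation $n/\gcd(r,n)=n$ that gives primitivity.
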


\begin{proof}
The polynomial $Q_n$ depends only on $u^n$, so
\eqref{eq:cyclic-automorphism} preserves the double-cover equation.  In the
inverse formula \eqref{eq:lambda-map}, its numerator is invariant and its
denominator acquires the factor $\zeta_n^r$, proving
\eqref{eq:coupling-character}.  The same substitution leaves every term of
$F_{r,n}$ invariant and gives \eqref{eq:raw-cyclic-action}.
\end{proof}

The numerator therefore survives as a character of the distinguished coupling
coordinate.  It also determines the pole or zero order of $\lambda_r$ at
$u=0$ and at infinity.  What it does not determine is the normalized
determinant function field.  This separation is the reason the genus theorem
depends on a reduced denominator alone.

The first cases already show the pattern.  For a single exact specialization,
take $E=1$ and $(a,b,d)=(2,1,3)$.  Then
\begin{equation}\label{eq:low-denominator-family}
 Q_n(u)=-3u^{2n}-10u^n+29.
\end{equation}
The quadratic in $u^n$ has two distinct nonzero roots.  Hence
\begin{align*}
 n=1:&\quad y^2=-3u^2-10u+29, &&g=0,\\
 n=2:&\quad y^2=-3u^4-10u^2+29, &&g=1,\\
 n=3:&\quad y^2=-3u^6-10u^3+29, &&g=2.
\end{align*}
These examples do not prove a general formula, since a degree-$2n$
polynomial can lose degree or acquire repeated roots.  They instead isolate
the question answered in the next section: which coefficient locus preserves
all $2n$ branch points?

\begin{figure}[H]
\centering
\includegraphics[width=\textwidth]{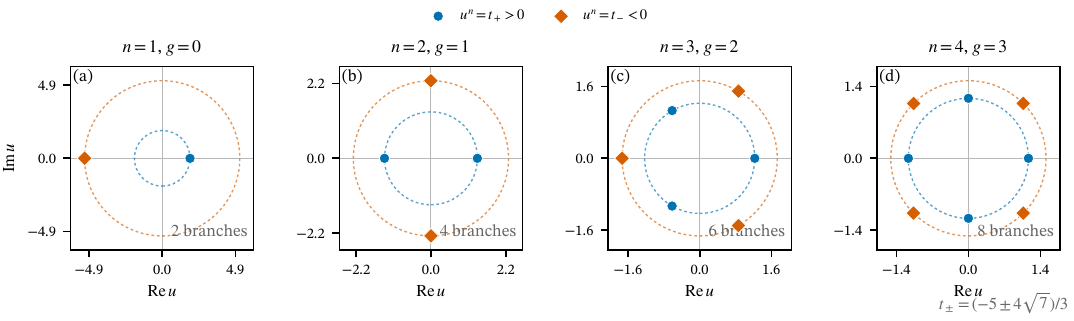}
\caption{Branch divisors for the fixed specialization
$Q_n=-3u^{2n}-10u^n+29$.  The two roots $t_\pm$ of the quadratic in $u^n$
each lift to an orbit of $n$ points.  The sequence $2,4,6,8$ of finite branch
points gives genera $0,1,2,3$ for $n=1,2,3,4$.}
\label{fig:branch-sequence}
\end{figure}

\section{Genus Formula and Degenerate Loci}\label{sec:genus}

We work over an algebraically closed field of characteristic zero when
discussing geometric genus.  The physical parameters may remain real; the
normalization and its genus are computed after complexification.  This
section separates three questions that a plane equation can obscure: whether
the branch polynomial is square-free, what the normalization is, and which
singularity occurs before normalization.

\subsection{Square-Free Locus}

Recall
\begin{equation}\label{eq:Qn-repeat}
 Q_n(u)=Au^{2n}+Bu^n+C,
\end{equation}
with $A,B,C$ given by \eqref{eq:ABC}.  Its discriminant as a polynomial in
$u$ is explicit.

\begin{lemma}[Polynomial discriminant]\label{lem:polynomial-discriminant}
For every $n\geq1$,
\begin{equation}\label{eq:Qn-discriminant}
 \operatorname{Disc}_u(Q_n)
 =n^{2n}(AC)^{n-1}(B^2-4AC)^n.
\end{equation}
Moreover,
\begin{equation}\label{eq:branch-discriminant}
 B^2-4AC=64E^2\Delta_Vb^2.
\end{equation}
For $n\geq2$, $Q_n$ is square-free of exact degree $2n$ if and only if
\begin{equation}\label{eq:generic-locus}
 E\Delta_Vb(\tau^2-4\Delta_V)(\delta^2+4\Delta_V)\ne0.
\end{equation}
For $n=1$, the exact condition is
$A(B^2-4AC)\ne0$; the stronger condition
\eqref{eq:generic-locus} is sufficient and will be used uniformly.
\end{lemma}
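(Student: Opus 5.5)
The plan is to compute the discriminant through the roots, reduce the branch discriminant to an identity in the matrix entries, and then read off the square-free locus factor by factor.

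\textbf{Discriminant formula.} Assume first that $A\ne0$. Factor
\[
 Q_n(u)=A(u^n-t_+)(u^n-t_-),
\]
where $t_\pm$ are the roots of $At^2+Bt+C$. Then apply multiplicativity of the discriminant:
\[
 \operatorname{Disc}(fg)=\operatorname{Disc}(f)\operatorname{Disc}(g)\operatorname{Res}(f,g)^2 .
\]
The pieces are as follows.
\begin{itemize}
\item The standard formula for a binomial gives $\operatorname{Disc}(u^n-t)=(-1)^{(n-1)(n-2)/2}n^nt^{n-1}$, up to the usual sign convention.
\item The resultant is $\operatorname{Res}(u^n-t_+,u^n-t_-)=(t_--t_+)^n$, because every root of the first factor satisfies $u^n=t_+$.
\end{itemize}
Collecting, together with the leading-coefficient normalization $A^{4n-2}$, uses $t_+t_-=C/A$ and $(t_+-t_-)^2=(B^2-4AC)/A^2$. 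The powers of $A$ must combine as
\[
 A^{4n-2}\,(C/A)^{n-1}\,A^{-2n}=A^{n-1}C^{n-1}.
\]
I will check this bookkeeping and the signs carefully, since the claimed formula carries no sign; the signs should cancel because the two binomial discriminants have equal sign. This gives $n^{2n}(AC)^{n-1}(B^2-4AC)^n$. Both sides are polynomial in $A,B,C$ when the discriminant is taken formally in degree $2n$, so the identity extends to $A=0$ by density.

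\textbf{Branch discriminant.} The identity
\[
 B^2-4AC=64E^2\Delta_Vb^2
\]
is a direct expansion. One has $B^2=4E^2\tau^2\delta^2$ and $4AC=4E^2(\tau^2-4\Delta_V)(\delta^2+4\Delta_V)$. Their difference is
\[
 4E^2\bigl(-4\tau^2\Delta_V+4\Delta_V\delta^2+16\Delta_V^2\bigr)
 =16E^2\Delta_V\bigl(\delta^2-\tau^2+4\Delta_V\bigr).
\]
Using $\delta^2-\tau^2=-4ad$ gives $\delta^2-\tau^2+4\Delta_V=4b^2$, which is the claim.

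\textbf{Square-free locus.} Exact degree $2n$ means $A\ne0$. Given that, $Q_n$ is square-free iff its discriminant is nonzero. For $n\ge2$ this requires $AC\ne0$ and $B^2-4AC\ne0$. Here $C=E^2(\delta^2+4\Delta_V)$, and the previous step factors $B^2-4AC$. The product $A\cdot C\cdot(B^2-4AC)$ is nonzero exactly when condition \eqref{eq:generic-locus} holds, since its factors are, up to constants, $E$, $\Delta_V$, $b$, $\tau^2-4\Delta_V$ and $\delta^2+4\Delta_V$.

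For $n=1$ the exponent $n-1=0$ removes the factor $AC$, leaving the condition $A(B^2-4AC)\ne0$. Condition \eqref{eq:generic-locus} implies this, so it is sufficient.

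\textbf{Expected obstacle.} The only delicate step is the sign and leading-coefficient bookkeeping in the discriminant formula. As an independent check, I would verify it at $n=1$, where it reduces to $B^2-4AC$, and at $n=2$ by direct computation.
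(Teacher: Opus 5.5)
Your proposal is correct and follows essentially the same route as the paper: you factor $Q_n=A(u^n-t_+)(u^n-t_-)$, compute the discriminant from root differences (your multiplicativity formula combined with the binomial discriminant is just an organized form of the paper's pairwise-product count), extend the identity polynomially, expand $B^2-4AC$ directly, and read the square-free locus off the factors $A$, $C$ and $B^2-4AC$. Your power and sign bookkeeping checks out; the resultant is actually $(t_+-t_-)^n$ rather than $(t_--t_+)^n$, but only its square enters, so nothing changes.
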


\begin{proof}
Write $f(t)=At^2+Bt+C$.  When $AC(B^2-4AC)\ne0$, its two roots
$t_1,t_2$ are distinct and nonzero, and
\[
 Q_n(u)=A(u^n-t_1)(u^n-t_2).
\]
Taking pairwise products of root differences, or equivalently taking the
resultant of $Q_n$ and $Q_n'$, gives \eqref{eq:Qn-discriminant}; the identity
extends polynomially to all coefficients.  Substitution from
\eqref{eq:ABC} gives
\begin{align*}
 B^2-4AC
 &=4E^2\left[
 \tau^2\delta^2
 -(\tau^2-4\Delta_V)(\delta^2+4\Delta_V)
 \right]\\
 &=64E^2\Delta_Vb^2.
\end{align*}
For $n\geq2$, the factors $A$, $C$, and $B^2-4AC$ in
\eqref{eq:Qn-discriminant} are all necessary.  When $n=1$, a zero constant
term can be a simple root, so $C\ne0$ is not necessary.
\end{proof}

\begin{theorem}[Denominator--genus formula for the pencil]\label{thm:denominator-genus}
Let $\beta=r/n$ be reduced, with $n\geq1$, and suppose
\eqref{eq:generic-locus} holds.  The smooth projective normalization of the
active determinant curve of \eqref{eq:intro-general-pencil} is a degree-two
cover of $\PP^1_u$ of genus
\begin{equation}\label{eq:main-genus}
 g=n-1.
\end{equation}
It is rational for $n=1$, elliptic for $n=2$, and hyperelliptic for
$n\geq3$.  For fixed $n$, its function field is independent of the numerator
$r$.
\end{theorem}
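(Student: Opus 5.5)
The proof reduces everything to the hyperelliptic model of \cref{prop:birational-reduction} and then reads off the genus from the square-free locus of \cref{lem:polynomial-discriminant}. First, since \eqref{eq:generic-locus} includes $\Delta_V\ne0$, \cref{prop:birational-reduction} applies. On the active locus $u\ne0$, the determinant curve $F_{r,n}=0$ is therefore birational to $\cH_n:y^2=Q_n(u)$. The map is given by \eqref{eq:y-map}, with inverse \eqref{eq:lambda-map}, which is regular wherever $u\ne0$. The smooth projective normalization is a birational invariant of an integral curve, so it suffices to compute the normalization of $\cH_n$.

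Next I will check that $\cH_n$ is geometrically integral and smooth in the affine chart. By \cref{lem:polynomial-discriminant}, condition \eqref{eq:generic-locus} makes $Q_n$ square-free of exact degree $2n\geq2$. In particular $Q_n$ is not a square in $\C(u)$, so $y^2-Q_n$ is irreducible and the function field $\C(u)[y]/(y^2-Q_n)$ is a quadratic extension of $\C(u)$. It also follows that $\Delta_V\ne0$ and the curve in $(u,\lambda)$ is irreducible, because it is birational to $\cH_n$. No components arise from clearing poles: we work on $u\ne0$ with $F_{r,n}$ itself, not after multiplying by a power of $u$.

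The smooth projective model is then the standard one: glue the chart $y^2=Q_n(u)$ to the chart at infinity $w^2=u'^{2n}Q_n(1/u')$, where $u'=1/u$ and $w=y/u^n$. The affine chart is smooth because $Q_n$ has only simple roots. At infinity the leading coefficient $A=\tau^2-4\Delta_V$ is nonzero, so $u'^{2n}Q_n(1/u')$ does not vanish at $u'=0$. This gives two unramified points over $u=\infty$.

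The projection to $\PP^1_u$ is a degree-two cover branched exactly at the $2n$ simple roots of $Q_n$. Riemann--Hurwitz gives $2g-2=2(-2)+2n$, so $g=n-1$. For $n=1$ this is a smooth conic, hence rational. For $n=2$ we get genus one; it is elliptic once a point is chosen, which is possible over $\C$. For $n\geq3$ the degree-two map to $\PP^1$ makes the curve hyperelliptic, since $g\geq2$.

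Finally, the function field $\C(u,y)$ with $y^2=Q_n(u)$ does not involve $r$, because $A,B,C$ in \eqref{eq:ABC} are independent of $r$. This is exactly \cref{cor:numerator-invariance}, whose integrality hypothesis was verified above.

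The only delicate step is the bookkeeping between the raw locus and the normalization. The birational maps are defined only where $u^r\ne0$, so the points over $u=0$ and $u=\infty$ may be singular or missing on the raw curve. Since genus is birational and the normalization is unique, these points are simply recovered in the smooth model. I expect this step to need only a sentence once irreducibility is recorded.
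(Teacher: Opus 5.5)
Your proposal is correct and follows essentially the same route as the paper. You reduce via \cref{prop:birational-reduction} to $y^2=Q_n(u)$, use \cref{lem:polynomial-discriminant} to get $2n$ simple finite branch points, use $A\neq0$ to make the two points over $u=\infty$ unramified, apply Riemann--Hurwitz, and read numerator independence off the $r$-free polynomial $Q_n$ via \cref{cor:numerator-invariance}. Your extra bookkeeping (irreducibility of $y^2-Q_n$, the explicit chart at infinity, and the fact that the substitution is an isomorphism on $u\neq0$, so no pole-clearing components arise) only makes explicit what the paper's short proof leaves implicit.
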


\begin{proof}
By \cref{prop:birational-reduction}, the determinant curve has the function
field of $y^2=Q_n(u)$.  By \cref{lem:polynomial-discriminant}, $Q_n$ has
degree $2n$ and $2n$ distinct finite roots.  The degree-two map to the
$u$-line is branched at these roots.  There are two points above infinity,
neither ramified.  Riemann--Hurwitz gives
\[
 2g-2=2(-2)+2n,
\]
which is \eqref{eq:main-genus}.  Geometric integrality and numerator
independence follow from \cref{prop:birational-reduction,cor:numerator-invariance}.
\end{proof}

The curve-theoretic genus in this theorem is the standard genus of the known
normal form $Y^2=X^{2n}+aX^n+1$ \cite{Shaska2003}.  The content specific to the
pencil is its determinant reduction, the identification of $n$ with the
reduced response denominator, and the numerator-sensitive decoration made
explicit in \cref{prop:coupling-character,thm:numerator-decoration}.

The theorem is a statement on a nonempty Zariski-open subset of coefficient
space.  The specialization \eqref{eq:low-denominator-family} proves
nonemptiness for every $n$.  It does not say that every pencil with response
denominator $n$ has genus $n-1$.

\subsection{Covering Maps}

The curve carries two natural maps, and their ramification data should not be
confused.  On the generic locus, the hyperelliptic projection
\begin{equation}\label{eq:pi-u}
 \pi_u:\cH_n\longrightarrow\PP^1_u
\end{equation}
has degree two and is unramified at its two points $P_\infty^\pm$ over
$u=\infty$.  In the chart $t=1/u$, $v=y/u^n$, its equation is
\begin{equation}\label{eq:infinity-chart}
 v^2=A+Bt^n+Ct^{2n},
\end{equation}
so these points are $v=\pm\sqrt A$.

The composed spectral map
\begin{equation}\label{eq:pi-z}
 \pi_z:\cH_n\longrightarrow\PP^1_z,
 \qquad z=u^n,
\end{equation}
has degree $2n$.  Each of $P_\infty^\pm$ has ramification index $n$, as does
each of the two points $P_0^\pm$ above $u=0$.  The $2n$ zeros of $Q_n$ each
have index two.  Thus the total ramification of $\pi_z$ is
\begin{equation}\label{eq:pi-z-total-ramification}
 2n+4(n-1)=6n-4,
\end{equation}
and Riemann--Hurwitz for the degree-$2n$ map again gives $g=n-1$.

The numerator remains visible in the original coupling coordinate.  On the
generic curve, the coordinate $q$ of \eqref{eq:q-coordinate} is finite and
nonzero at $P_0^\pm$ and has a pole of order $n$ at $P_\infty^\pm$.  Since
$\lambda=u^{-r}q$,
\begin{equation}\label{eq:lambda-divisor-orders}
 \operatorname{ord}_{P_0^\pm}(\lambda)=-r,
 \qquad
 \operatorname{ord}_{P_\infty^\pm}(\lambda)=r-n.
\end{equation}
Thus numerator invariance concerns the normalized function field and genus,
not the divisor of $\lambda$ or the raw plane embedding.

\subsection{Square-Class Reduction}

For a nonzero polynomial $Q$, write
\begin{equation}\label{eq:squareclass-factor}
 Q(u)=H(u)^2S(u),
\end{equation}
where $S$ retains precisely the irreducible factors of odd multiplicity and
is square-free.  We call $S$ the \emph{odd-multiplicity square-class part}.
It is not the polynomial radical: for example, if
$Q=(u^n-1)^2$, then $S=1$, not $u^n-1$.

\begin{proposition}[Genus from the square class]\label{prop:squarefree-genus}
Assume $\Delta_V\ne0$ and let $d=\deg S$ in
\eqref{eq:squareclass-factor}.  If $d\geq1$, the curve $y^2=Q(u)$ is
geometrically integral and its normalization has genus
\begin{equation}\label{eq:squarefree-genus}
 g=\left\lfloor\frac{d-1}{2}\right\rfloor.
\end{equation}
If $S$ is constant, the reduced curve splits over the algebraic closure into
two rational components.  The multiplicities removed in forming $S$ must
still be retained when classifying singularities of the plane model.
\end{proposition}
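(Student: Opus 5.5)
The plan is to reduce the curve $y^2=Q(u)$ birationally to $w^2=S(u)$ and then apply the hyperelliptic Riemann--Hurwitz count already used in \cref{thm:denominator-genus}. Write $Q=H^2S$ as in \eqref{eq:squareclass-factor}. On the open set $H(u)\ne0$ put $w=y/H(u)$. The plane curve $y^2=Q(u)$ then becomes $w^2=S(u)$, with inverse $y=H(u)w$. Both maps are rational on dense opens, so the function fields agree:
\[
\C(u,y)=\C(u,w),\qquad w^2=S.
\]
Throughout we work over the algebraic closure, as fixed at the start of \cref{sec:genus}.

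\emph{Integrality when $d\ge1$.} Since $S$ is square-free of positive degree, it is not a square in $\C(u)$: any root of $S$ appears in $S$ with multiplicity one. Hence $w^2-S$ is irreducible over $\C(u)$. Because it is monic in $w$, Gauss's lemma makes it irreducible in $\C[u,w]$. The same holds for $y^2-Q$: a factorization $y^2-Q=(y-g)(y+g)$ with $g\in\C[u]$ would force $Q=g^2$, which would make $S$ constant. So $y^2=Q$ is geometrically integral, and its normalization is the smooth projective model of the degree-two extension $\C(u,\sqrt S)/\C(u)$.

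\emph{Genus count.} The double cover $\pi_u$ is ramified exactly at the $d$ distinct roots of $S$, each of index two. It is ramified at infinity exactly when $d$ is odd, as one sees from the chart $t=1/u$, $v=w/u^{\lceil d/2\rceil}$, which gives $v^2=t^{2\lceil d/2\rceil-d}\,\widetilde S(t)$ with $\widetilde S(0)\ne0$, in the manner of \eqref{eq:infinity-chart}. The number of branch points is therefore $2\lceil d/2\rceil$. Riemann--Hurwitz gives
\[
2g-2=-4+2\lceil d/2\rceil,
\]
so $g=\lceil d/2\rceil-1=\lfloor (d-1)/2\rfloor$. This agrees with \eqref{eq:squarefree-genus}.

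\emph{Constant $S$.} Here $Q=cH^2$ with $c\ne0$, since $Q\ne0$. The reduced curve is the union $y=\pm\sqrt c\,H(u)$, and each component is a graph over the $u$-line, hence rational. The two components are distinct because $H\ne0$. The final sentence of the proposition is a caveat, not a claim: the change of variables $w=y/H$ fails to be an isomorphism over the roots of $H$, so singularity types of the plane model depend on the multiplicities in $H$, which normalization discards.

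The hypothesis $\Delta_V\ne0$ enters only through \cref{prop:birational-reduction}, which identifies the determinant curve with $y^2=Q_n$. For the abstract curve $y^2=Q$ it plays no role. The step needing the most care is the parity analysis at infinity, which distinguishes odd from even $d$. That analysis is routine once $\deg S=d$ is used rather than $\deg Q$.
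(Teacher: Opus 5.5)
Your proof is correct and follows the paper's route: the substitution $w=y/H(u)$ reduces the curve to $w^2=S(u)$, Riemann--Hurwitz with the parity analysis at infinity gives $g=\lceil d/2\rceil-1=\lfloor (d-1)/2\rfloor$, and constant $S$ gives the factorization $(y-cH)(y+cH)=0$. Your explicit irreducibility argument for $y^2-Q$ and your chart at infinity supply details that the paper's terse proof leaves implicit.
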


\begin{proof}
When $S$ is nonconstant, $Y=y/H(u)$ identifies the function field with
$Y^2=S(u)$.  If $d$ is even, the double cover has $d$ finite branch points
and is unramified above infinity.  If $d$ is odd, it has one further branch
point at infinity.  Riemann--Hurwitz gives
\eqref{eq:squarefree-genus}.  If $S$ is constant, the equation factors as
$(y-cH)(y+cH)=0$ after choosing a square root $c$ of the constant.
\end{proof}

\subsection{Degenerate Loci}

For $E\ne0$, the coefficient strata admit an exhaustive description.  First
suppose $\Delta_V=0$ and $V\ne0$.  Then \eqref{eq:q-conic} is genuinely linear
in $q$.  After common factors are removed, its main active component is the
rational graph
\begin{equation}\label{eq:delta-zero-param}
 \lambda=
 \frac{u^{2n}-E^2}{u^r(\tau u^n+E\delta)}.
\end{equation}
Common zeros of numerator and denominator can add vertical components; the
quadratic double-cover model is not valid on this locus.
If $V=0$, the determinant is instead $u^{2n}-E^2$, independent of $\lambda$,
and the active curve consists only of the vertical $\lambda$-lines over its
$2n$ roots.

Now suppose $\Delta_V\ne0$.  If $b=0$, the two channels decouple and
\begin{equation}\label{eq:bzero-square}
 Q_n(u)=(\delta u^n+E\tau)^2.
\end{equation}
The reduced curve has two rational components.  If $b\ne0$, then
$B^2-4AC\ne0$, and only the vanishing pattern of $A$ and $C$ remains:
\begin{enumerate}[label=\textnormal{(\alph*)}]
\item If $AC\ne0$, $Q_n$ is square-free of degree $2n$ and $g=n-1$.
\item If exactly one of $A,C$ vanishes, the odd-multiplicity square-class
part gives $g=\lfloor(n-1)/2\rfloor$.
\item If $A=C=0$, then $Q_n=Bu^n$.  For odd $n$ the normalization is one
rational component; for even $n$ the reduced curve splits into two rational
components.
\end{enumerate}

The middle case has two different plane models.  If $A=0$ and $BC\ne0$,
then $Q_n=Bu^n+C$ is square-free of degree $n$.  Infinity is ramified for
odd $n$ and splits into two unramified points for even $n$.  If $C=0$ and
$AB\ne0$, then
\begin{equation}\label{eq:Czero-model}
 Q_n=u^n(Au^n+B).
\end{equation}
At $u=0$ this has local type $A_{n-1}$ for $n\geq2$: it is a node only for
$n=2$, a cusp for $n=3$, and a higher singularity thereafter.  Removing the
even part of the multiplicity gives degree $n$ when $n$ is even and degree
$n+1$ when $n$ is odd, yielding the same genus
$\lfloor(n-1)/2\rfloor$.

For reference, \cref{tab:regimes} includes the irrational response beside
the rational strata.  The last row concerns the response cover, not an
additional algebraic degeneration.

\begin{table}[t]
\centering
\caption{Geometry of the two-channel power-law determinant.  Rows describe
generic points of the stated strata; intersections are resolved by
\cref{prop:squarefree-genus}.}
\label{tab:regimes}
\begin{tabular}{@{}p{0.22\textwidth}p{0.29\textwidth}p{0.37\textwidth}@{}}
\toprule
Parameter regime & Branch equation & Normalized geometry\\
\midrule
$E\Delta_VbAC\ne0$ & square-free degree $2n$ & degree-two cover, $g=n-1$\\
$\Delta_V=0$, $V\ne0$ & linear in $q$ & rational main active component\\
$V=0$ & independent of $\lambda$ & $2n$ vertical rational components\\
$b=0$, $\Delta_V\ne0$ & perfect square & two rational scalar components\\
$b\ne0$, exactly one of $A,C=0$ & square class of degree $n$ or $n+1$ &
$g=\lfloor(n-1)/2\rfloor$\\
$b\ne0$, $A=C=0$ & $Bu^n$ & one or two rational components\\
$\beta\notin\Q$ & infinite response monodromy & no compatible finite response cover\\
\bottomrule
\end{tabular}
\end{table}

\subsection{Boundary Singularities}

Even on the generic locus, the raw plane closure remembers the numerator.
Let $s=|r|>0$ and assume $C\ne0$.  For a negative numerator $r=-s$, clearing
the Laurent pole gives
\begin{equation}\label{eq:negative-r-closure}
 u^{2s}(u^{2n}-E^2)
 -\lambda u^s(\tau u^n+E\delta)
 +\Delta_V\lambda^2=0.
\end{equation}
Its discriminant in $\lambda$ is $u^{2s}Q_n(u)$.  The two branches through
$(u,\lambda)=(0,0)$ therefore differ by $u^s$ times a unit and have local
type $A_{2s-1}$.  It is a node only for $s=1$.

For a positive numerator $r=s$, use the reciprocal chart
$\mu=1/\lambda$.  The closure is
\begin{equation}\label{eq:positive-r-closure}
 (u^{2n}-E^2)\mu^2
 -u^s(\tau u^n+E\delta)\mu
 +\Delta_Vu^{2s}=0,
\end{equation}
with the same discriminant and singularity type at $(u,\mu)=(0,0)$.
Normalization separates these tangent branches.  This is why the numerator
changes the boundary embedding but not the geometric genus.

\begin{theorem}[Numerator-sensitive decoration]
\label{thm:numerator-decoration}
Assume the generic locus \eqref{eq:generic-locus} and let $r\ne0$.  For fixed
$n$, changing the reduced numerator leaves the normalized determinant curve
unchanged but changes its distinguished coupling data in three linked ways:
\begin{enumerate}[label=\textnormal{(\roman*)}]
\item $\lambda_r$ transforms by the primitive cyclic character
$\sigma^*\lambda_r=\zeta_n^{-r}\lambda_r$;
\item its orders at the two points over zero and infinity are
\begin{equation}\label{eq:decoration-orders}
 \operatorname{ord}_{P_0^\pm}(\lambda_r)=-r,
 \qquad
 \operatorname{ord}_{P_\infty^\pm}(\lambda_r)=r-n;
\end{equation}
\item the raw closure has an $A_{2|r|-1}$ boundary singularity over $u=0$,
in the $\lambda$ chart for $r<0$ and the reciprocal chart for $r>0$.
\end{enumerate}
Thus numerator invariance holds for the unmarked function field, not for the
curve equipped with its coupling function and raw boundary model.
\end{theorem}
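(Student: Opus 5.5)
The proof assembles results already established, with one local computation added for each item. The unchanged normalized curve comes from \cref{thm:denominator-genus} and \cref{cor:numerator-invariance}. On the generic locus \eqref{eq:generic-locus}, every exponent $r/n$ gives the same model $y^2=Q_n(u)$, and $Q_n$ does not involve $r$. The distinguished coupling function is $\lambda_r=(y+\tau u^n+E\delta)/(2\Delta_Vu^r)$ from \eqref{eq:lambda-map}, and the three items are read off from it.

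Item (i) is \cref{prop:coupling-character}. The character $\zeta_n^{-r}$ is primitive because $\gcd(r,n)=1$, and for $n=1$ it is trivial.

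For item (ii) I would write $\lambda_r=u^{-r}q$, where $q=(y+\tau u^n+E\delta)/(2\Delta_V)$. The task is to find the orders of $q$ at the four boundary points.

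At $P_0^\pm$, $u$ is a local parameter because $C\ne0$ makes $u=0$ unramified. There $y=\pm\sqrt C$, so $q(P_0^\pm)=(\pm\sqrt C+E\delta)/(2\Delta_V)$. I need this to be nonzero. It vanishes only if $C=E^2\delta^2$, that is, $4E^2\Delta_V=0$, which is excluded. Hence $\operatorname{ord}_{P_0^\pm}\lambda_r=-r$.

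At $P_\infty^\pm$, use the chart \eqref{eq:infinity-chart}: $y=u^n v$ with $v\to\pm\sqrt A$, and $t=1/u$ is a local parameter since $A\ne0$. Then $q\sim u^n(\pm\sqrt A+\tau)/(2\Delta_V)$. The coefficient vanishes only if $A=\tau^2$, i.e.\ $\Delta_V=0$, again excluded. So $q$ has a pole of order exactly $n$, and $\operatorname{ord}_{P_\infty^\pm}\lambda_r=-n+r$.

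Checking that these leading coefficients do not vanish is the only delicate point. Both conditions come down to $\Delta_V\ne0$ and $E\ne0$. The two signs must be checked separately, using $(\sqrt C-E\delta)(\sqrt C+E\delta)=4E^2\Delta_V$ and $(\sqrt A-\tau)(\sqrt A+\tau)=-4\Delta_V$.

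For item (iii) I would take the closures \eqref{eq:negative-r-closure} and \eqref{eq:positive-r-closure}. In both, the quadratic coefficient is a unit at the origin: $\Delta_V$ in the first, and $-E^2$ (up to sign) in the second. Completing the square gives $w^2=u^{2s}Q_n(u)$ with $Q_n(0)=C\ne0$, where $w$ is a unit-times-linear change of the $\lambda$ (resp.\ $\mu$) coordinate. Taking $\sqrt{Q_n}$ as a local unit, this becomes $w'^2=u^{2s}$, which is the $A_{2s-1}$ normal form with $s=|r|$. The two branches $w'=\pm u^s$ are separated by normalization. They correspond to $P_0^\pm$, consistent with (ii): for $r<0$ the function $\lambda$ vanishes to order $s$ there, and for $r>0$ the function $\mu=1/\lambda$ does.

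Finally, the concluding sentence follows because all three data visibly depend on $r$, while the function field does not.
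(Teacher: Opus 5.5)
Your proposal is correct and follows the paper's proof. Item (i) is cited from \cref{prop:coupling-character}, item (ii) is the order count for $\lambda_r=u^{-r}q$ at $P_0^\pm$ and $P_\infty^\pm$, and item (iii) is read off from the discriminant $u^{2|r|}Q_n(u)$ of the two cleared closures using $Q_n(0)=C\ne0$. Your explicit checks that the leading coefficients $(\pm\sqrt C+E\delta)/(2\Delta_V)$ and $(\pm\sqrt A+\tau)/(2\Delta_V)$ are nonzero, via $C-E^2\delta^2=4E^2\Delta_V$ and $A-\tau^2=-4\Delta_V$, supply a detail the paper only asserts when it states \eqref{eq:lambda-divisor-orders}.
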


\begin{proof}
Part (i) is \cref{prop:coupling-character}; part (ii) is
\eqref{eq:lambda-divisor-orders}.  The discriminants of
\eqref{eq:negative-r-closure} and \eqref{eq:positive-r-closure} are
$u^{2|r|}Q_n(u)$.  Since the generic locus gives $Q_n(0)\ne0$, the two smooth
local branches differ by $u^{|r|}$ times a unit, which is the
$A_{2|r|-1}$ type in part (iii).
\end{proof}

\subsection{Parameter Dependence}

The genus is locally constant off the discriminant locus.  On that locus the
geometric genus can drop, and the reduced curve can split.  The response
denominator fixes the generic value only after the two-channel class and a
square-free coefficient locus have been specified.
Matrix coefficients then determine branch collisions and, in the elliptic
case, the modulus.  This division of labor will be used in
\cref{sec:ads2}: a conformal dimension supplies a candidate denominator,
while the finite probe couplings decide whether the generic topology is
actually attained.

\section{Irrational Response Exponents}\label{sec:irrational}

The rational theory used a finite cyclic response cover.  For an irrational
exponent there is still a well-defined analytic response on the logarithmic
cover, but no finite cover can make that response meromorphic.

Let $\widetilde{\C^\times}=\C$ with coordinate $w$ and covering map
\begin{equation}\label{eq:log-cover}
 z=e^w.
\end{equation}
The lifted response is $e^{\beta w}$.  The deck transformation
$w\mapsto w+2\pi i$ acts by
\begin{equation}\label{eq:monodromy-multiplier}
 e^{\beta w}\longmapsto e^{2\pi i\beta}e^{\beta w}.
\end{equation}

\begin{theorem}[Infinite monodromy]\label{thm:irrational-monodromy}
If $\beta\in\R\setminus\Q$, the local monodromy of $z^\beta$ about $z=0$
has infinite order.  There is no finite-sheeted branched cover of a punctured
neighborhood of zero on which $z^\beta$ becomes a single-valued meromorphic
function.
\end{theorem}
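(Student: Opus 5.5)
The proof splits into a monodromy computation and a covering-space argument.

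\emph{Infinite order.} Work on the logarithmic cover \eqref{eq:log-cover}. By \eqref{eq:monodromy-multiplier}, continuation of $z^\beta$ once around $z=0$ multiplies every branch by $e^{2\pi i\beta}$. After $k$ loops the multiplier is $e^{2\pi ik\beta}$. This equals $1$ exactly when $k\beta\in\Z$. For irrational $\beta$ that happens only for $k=0$. So the monodromy representation $\pi_1(D^\times)\cong\Z\to\C^\times$ is injective, and the local monodromy has infinite order. As a consequence, the germs obtained by continuation, $e^{2\pi ik\beta}z^\beta$ for $k\in\Z$, are pairwise distinct.

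\emph{No finite cover.} Suppose $p:X\to D^\times$ is a finite-sheeted branched cover of a punctured disc, and $f$ is a single-valued meromorphic function on $X$ that agrees with a branch of $z^\beta\circ p$ near some point. The first step is to reduce to the connected, unbranched case.
\begin{enumerate}
\item Restrict to the connected component containing that point.
\item Branch points of a finite branched cover of $D^\times$ are isolated, so after shrinking the disc they can be removed, since only the local germ at $0$ matters.
\end{enumerate}
A connected finite unbranched cover of $D^\times$ is then, by the classification of covers via subgroups of $\pi_1(D^\times)=\Z$, isomorphic to $u\mapsto u^m$ for some $m\ge1$.

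Now lift the loop $\gamma^m$, where $\gamma$ is the generator. It lifts to a closed loop in $X$. Since $f$ is single-valued and meromorphic, continuation of the germ of $f=z^\beta\circ p$ along this closed loop returns the same germ. Away from its discrete poles and zeros, $f$ is holomorphic, so this continuation is unambiguous along a loop avoiding them. On the other hand, that continuation coincides with continuing $z^\beta$ along $\gamma^m$ downstairs. That multiplies the germ by $e^{2\pi im\beta}\neq1$, which is a contradiction.

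\emph{Main obstacle.} The delicate step is making the reduction to the unbranched cyclic cover rigorous in the branched, possibly disconnected, case. I would handle it by shrinking the neighbourhood so that no branch points remain and by working on a single component. An alternative is to avoid the classification entirely. Note that $p$ has finite degree $m$, so $\gamma^{m!}$ lifts to closed loops from every point of the fibre. That gives the same contradiction with $e^{2\pi i m!\beta}\ne1$ without identifying $X$ explicitly.
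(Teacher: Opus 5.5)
Your argument is correct in substance, and the fallback in your last paragraph is exactly the paper's proof. The paper computes the multiplier $e^{2\pi i m\beta}\neq 1$. It then uses only that the monodromy permutation of a finite-sheeted cover around the puncture has finite order, so after finitely many turns every sheet closes up and a single-valued function must return to its original value. Your primary route, which identifies the connected cover with $u\mapsto u^m$ via the classification of covers of $D^\times$, reaches the same contradiction but needs more structure than the permutation argument.

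One step of that primary route is false as stated. Branch values of a finite branched cover of $D^\times$ are discrete and closed in $D^\times$, but they may accumulate at the puncture. In that case no shrinking of the disc removes them. For example, the two-sheeted cover defined by $w^2=\sin(1/z)$ over $0<|z|<1$ is connected and branched over $z=1/(k\pi)$ for every integer $k\neq 0$. These branch values lie in every punctured neighbourhood of $0$.

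The repair is short, and it also makes your fallback precise. The set of branch values is countable, so some circle $|z|=\rho$ misses it. Since that set is closed, a thin annulus around this circle also misses it. Take $\gamma$ to be that circle. Over the annulus the cover is a finite unbranched covering, so either your classification or the observation that $\sigma^{m!}=\mathrm{id}$ for the monodromy permutation $\sigma$ shows that the lift of $\gamma^{m!}$ is closed. Continuation of $z^\beta$ depends only on the homotopy class of the loop in $D^\times$, so the lift multiplies the germ by $e^{2\pi i m!\beta}\neq 1$, which is the contradiction.

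The paper's own proof glosses over this same point. Its phrase ``the monodromy permutation around a puncture'' tacitly assumes such a loop avoiding the branch values has been chosen.
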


\begin{proof}
By \eqref{eq:monodromy-multiplier}, $m$ turns about the origin multiply the
response by $e^{2\pi im\beta}$.  This is one only if $m\beta\in\Z$.  No
positive integer $m$ has this property when $\beta$ is irrational.

On a finite-sheeted branched cover, the monodromy permutation around a
puncture has finite order.  After some positive number $m$ of turns every
sheet returns to itself, and a single-valued meromorphic function must return
to its original value.  Applied to $z^\beta$, this would require
$e^{2\pi im\beta}=1$, a contradiction.
\end{proof}

The coordinate qualification is important.  If one makes the transcendental
change
\begin{equation}\label{eq:irrational-q-coordinate}
 q=\lambda z^\beta,
\end{equation}
then the raw determinant equation takes the algebraic form
\begin{equation}\label{eq:irrational-abstract-conic}
 \Delta_Vq^2-(\tau z+E\delta)q+z^2-E^2=0.
\end{equation}
Thus an abstract conic remains after the response has been absorbed into a
new coordinate.  What \cref{thm:irrational-monodromy} forbids is a finite
cover on which the original $z^\beta$, and hence the original coupling
coordinate $\lambda=z^{-\beta}q$ on a nontrivial branch, are both meromorphic.
It does not forbid an algebraic model obtained by forgetting that coordinate.
Degenerate pencils in which the response disappears are outside this
coordinate-compatible obstruction.

The conclusion concerns finite algebraization, not local approximation.
Choose a simply connected domain $\Omega\subset\C^\times$ and a holomorphic
branch $\Log_\Omega z$.  Define
\begin{equation}\label{eq:branch-power}
 z^\beta_\Omega=\exp(\beta\Log_\Omega z).
\end{equation}

\begin{proposition}[Compact-uniform rational approximation]
\label{prop:rational-approximation}
Let $\beta_k\in\Q$ converge to $\beta\in\R$.  On every compact
$K\Subset\Omega$,
\begin{equation}\label{eq:uniform-power}
 \sup_{z\in K}|z^{\beta_k}_\Omega-z^\beta_\Omega|\longrightarrow0.
\end{equation}
Consequently, for fixed matrices $H_0,V$ and for $\lambda$ in a compact set,
the pencils and their determinants converge uniformly on $K$.
\end{proposition}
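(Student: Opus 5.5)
The plan is to write the difference of powers through the exponential and use the fact that $\Log_\Omega$ is bounded on compact sets. Fix $K\Subset\Omega$. Since $\Log_\Omega$ is holomorphic on $\Omega$, it is continuous on $K$, so
\[
 L_K:=\sup_{z\in K}|\Log_\Omega z|<\infty.
\]
For each $z\in K$ we have
\[
 z^{\beta_k}_\Omega-z^\beta_\Omega=z^\beta_\Omega\bigl(\exp((\beta_k-\beta)\Log_\Omega z)-1\bigr).
\]
Two bounds control the two factors. First, $|z^\beta_\Omega|\le e^{|\beta|L_K}$. Second, the elementary estimate $|e^w-1|\le|w|e^{|w|}$ applies with $w=(\beta_k-\beta)\Log_\Omega z$. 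Together they give
\[
 \sup_K|z^{\beta_k}_\Omega-z^\beta_\Omega|\le e^{|\beta|L_K}\,|\beta_k-\beta|L_K\,e^{|\beta_k-\beta|L_K},
\]
and the right-hand side tends to zero. This proves \eqref{eq:uniform-power}.

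An equivalent argument avoids the explicit estimate. The map $(\gamma,z)\mapsto\exp(\gamma\Log_\Omega z)$ is continuous on $[\beta-1,\beta+1]\times K$, which is compact, so it is uniformly continuous there; uniform convergence in $z$ as $\gamma\to\beta$ follows directly.

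For the pencils, fix a compact set $\Lambda\subset\C$ of couplings. Then
\[
 \cA_{\beta_k}(z;\lambda)-\cA_\beta(z;\lambda)=-\lambda\,(z^{\beta_k}_\Omega-z^\beta_\Omega)\,V.
\]
Its operator norm is at most $\sup_\Lambda|\lambda|\,\|V\|\,\sup_K|z^{\beta_k}_\Omega-z^\beta_\Omega|$, so the pencils converge to $\cA_\beta$ uniformly on $K\times\Lambda$.

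For the determinants, I use the explicit $2\times2$ expansion of \cref{lem:raw-determinant}, with $u^r$ replaced by $z^\gamma_\Omega$:
\[
 z^2-E^2-\lambda z^\gamma_\Omega(\tau z+E\delta)+\Delta_V\lambda^2(z^\gamma_\Omega)^2.
\]
This expression is a polynomial in $z$, $\lambda$ and $z^\gamma_\Omega$. All three arguments are uniformly bounded on $K\times\Lambda$, and $z^{\beta_k}_\Omega$ is uniformly bounded in $k$ by the estimate above. A polynomial is uniformly continuous on bounded sets, so the determinants converge uniformly as well. The same conclusion holds for general $N\times N$ matrices $H_0,V$, since the determinant is a polynomial in the entries and is therefore locally Lipschitz.

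The only step needing care is that $\Log_\Omega$ must be a single fixed holomorphic branch on the simply connected domain $\Omega$. That choice is what makes $L_K$ finite and keeps every $z^{\beta_k}_\Omega$ on the same sheet as $z^\beta_\Omega$. Apart from this the proof is routine, and I expect no real obstacle.
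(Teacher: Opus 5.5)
Your proof is correct and takes essentially the paper's route: $\Log_\Omega$ is bounded on the compact set $K$, the exponential is continuous, and the pencil and its determinant are polynomial in the entries. Your estimate $|e^w-1|\le|w|e^{|w|}$ just makes the paper's uniform-continuity step quantitative, giving an explicit $O(|\beta_k-\beta|)$ rate.
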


\begin{proof}
The image $\Log_\Omega(K)$ is compact.  The exponential function is uniformly
continuous on a compact neighborhood of
$\{\beta_k\Log_\Omega z:z\in K,\ k\gg1\}$.  Since
$(\beta_k-\beta)\Log_\Omega z\to0$ uniformly on $K$,
\eqref{eq:uniform-power} follows.  Matrix multiplication and the finite
determinant are polynomial operations in the entries, so their convergence
is uniform as well.
\end{proof}

One can now transfer isolated spectral zeros without assigning a limiting
genus.

\begin{corollary}[Stability of local spectral zeros]\label{cor:zero-stability}
Fix $\lambda\in\C$ and let
\begin{equation}\label{eq:D-beta}
 D_\beta(z)=\det\bigl[zI-H_0-\lambda z^\beta_\Omega V\bigr].
\end{equation}
Let $U\Subset\Omega$ have a piecewise smooth boundary and suppose
$D_\beta$ has no zero on $\partial U$.  Then, for all sufficiently large
$k$, $D_{\beta_k}$ and $D_\beta$ have the same number of zeros in $U$,
counted with multiplicity.  In particular, a simple zero of $D_\beta$ is
approximated by a unique zero of $D_{\beta_k}$.
\end{corollary}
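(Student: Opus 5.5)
The corollary is a direct application of Hurwitz's theorem, or equivalently the argument principle, combined with \cref{prop:rational-approximation}.

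\emph{Step 1: holomorphy.} Each $D_{\beta_k}$ and $D_\beta$ is holomorphic on $\Omega$, because $z^{\beta}_\Omega=\exp(\beta\Log_\Omega z)$ is holomorphic there and the determinant is a polynomial in the entries. We may shrink to a compact set $K$ with $\overline U\subset K\Subset\Omega$.

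\emph{Step 2: uniform convergence on $\overline U$.} \cref{prop:rational-approximation}, applied with $\lambda$ fixed (a single point is compact), gives $D_{\beta_k}\to D_\beta$ uniformly on $K$, and hence uniformly on $\overline U$.

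\emph{Step 3: Rouché.} Since $\partial U$ is compact and $D_\beta$ has no zero on it, the quantity
\[
 m=\min_{\partial U}|D_\beta|
\]
is strictly positive. Choose $k_0$ so that $\sup_{\partial U}|D_{\beta_k}-D_\beta|<m$ for all $k\ge k_0$. Rouché's theorem then shows that $D_{\beta_k}$ and $D_\beta$ have the same number of zeros in $U$, counted with multiplicity.

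If one prefers the argument principle, the count is $\frac1{2\pi i}\oint_{\partial U}D'/D\,\dd z$. The derivatives $D_{\beta_k}'$ converge uniformly on $\partial U$ by the Cauchy estimates on the slightly larger compact set $K$. The integral is therefore continuous in $k$ and integer-valued, hence eventually constant. Piecewise smoothness of $\partial U$ is exactly what makes this contour integral meaningful.

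\emph{Step 4: simple zeros.} Let $z_0$ be a simple zero of $D_\beta$. Since zeros of a holomorphic function that is not identically zero are isolated, there is a small disc $U_\epsilon$ around $z_0$ whose closure lies in $\Omega$ and contains no other zero of $D_\beta$, including on its boundary. Step 3 applied to $U_\epsilon$ gives exactly one zero of $D_{\beta_k}$ in $U_\epsilon$ for all large $k$. Letting $\epsilon\to0$ along a sequence shows that this zero converges to $z_0$.

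The only point needing care is the hypothesis that $D_\beta$ is not identically zero. This follows from the assumption of no zero on $\partial U$, which is nonempty because $U$ is bounded with piecewise smooth boundary. I expect no genuine obstacle. The main thing to state explicitly is that the branch $\Log_\Omega$ is the same one for every $\beta_k$, since this is what \cref{prop:rational-approximation} requires.
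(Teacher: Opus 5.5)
Your proposal is correct and follows the paper's proof: uniform convergence on $\partial U$ from \cref{prop:rational-approximation} at fixed $\lambda$, strict positivity of $\min_{\partial U}|D_\beta|$, Rouch\'e's theorem for the zero count, and the same argument on a small disk around a simple zero. Your argument-principle variant and the remarks on using a common branch $\Log_\Omega$ and on $D_\beta\not\equiv0$ are harmless additions to the same argument.
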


\begin{proof}
The minimum of $|D_\beta|$ on $\partial U$ is positive.  Uniform convergence
from \cref{prop:rational-approximation} gives
$|D_{\beta_k}-D_\beta|<|D_\beta|$ there for large $k$.  Rouche's theorem
gives the zero count.  Applying the same argument in a small disk containing
only one simple zero gives the last statement.
\end{proof}

The denominators of rational approximants to an irrational number must be
unbounded.  Their generic algebraic genera therefore tend to infinity along
any sequence satisfying \eqref{eq:generic-locus}.  This does not define a
compact curve of infinite genus, nor does it imply convergence of global
period matrices or spectral measures.  What converges is the analytic pencil
on each fixed logarithmic branch and, under the stated boundary condition,
its local zero divisor.  The distinction prevents a local approximation
theorem from being promoted into an unsupported global limiting geometry.

\begin{figure}[H]
\centering
\includegraphics[width=\textwidth]{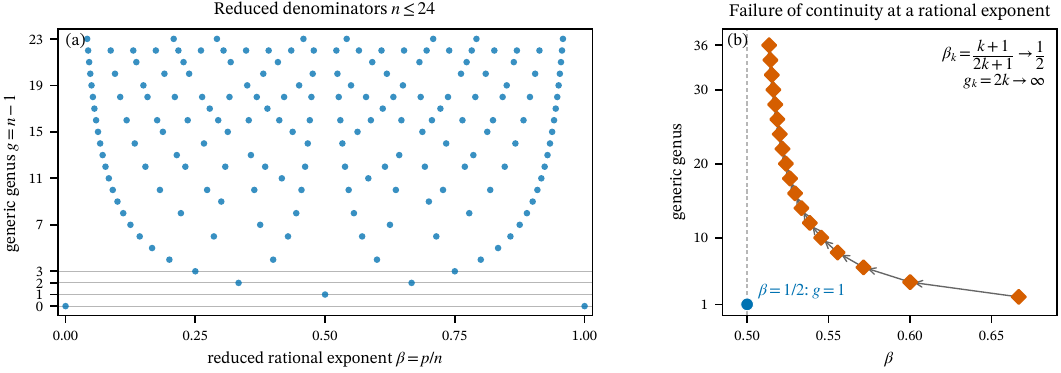}
\caption{The denominator--genus hierarchy on reduced rational exponents.
The map is highly discontinuous: while $\beta_k=(k+1)/(2k+1)$ converges to
$1/2$, its generic genus is $2k$ and diverges, whereas the reduced exponent
$1/2$ itself has genus one.  This global discontinuity is compatible with
compact-uniform convergence on a fixed logarithmic branch.  The plot displays
a consequence of the theorem; it is not numerical evidence for it.}
\label{fig:denominator-hierarchy}
\end{figure}

\section{Elliptic Moduli}\label{sec:elliptic}

The denominator $n=2$ is the first non-rational layer.  Its determinant
curve is the binary quartic
\begin{equation}\label{eq:elliptic-general-quartic}
 y^2=A u^4+B u^2+C,
\end{equation}
where $A,B,C$ are given by \eqref{eq:ABC}.  The exponent determines that
the generic curve is elliptic.  The coefficient matrices determine which
elliptic curve it is.

The half-order determinant, its moduli formula, and the canonical
specialization appeared in preliminary form in \cite{Liu2026LocalHP}.  Here
they are placed under the arbitrary-denominator theorem; the genus formula
and degeneration analysis for general reduced denominators are the new layer.

\subsection{Binary-Quartic Invariants}

For a binary quartic
\begin{equation}\label{eq:binary-quartic}
 f(U,W)=A U^4+B U^2W^2+C W^4,
\end{equation}
use the invariant convention
\begin{equation}\label{eq:IJ}
 I=B^2+12AC,\qquad
 K=72ABC-2B^3.
\end{equation}
On the nonsingular locus, the corresponding genus-one curve has
\begin{equation}\label{eq:j-binary-quartic}
 j=16\frac{I^3}{AC(B^2-4AC)^2}.
\end{equation}
This normalization follows, for example, by passing to the Jacobian model
$Y^2=X^3-27IX-27K$; a direct derivation is included in
\cref{app:binary-quartics}.  The invariant convention is standard binary-
quartic invariant theory \cite{Fisher2008}.

Substitution gives a formula in the matrix invariants alone.

\begin{theorem}[Elliptic moduli map]\label{thm:j-map}
On the locus \eqref{eq:generic-locus} with $n=2$, the determinant curve has
\begin{equation}\label{eq:j-map}
 j=
 16\,
 \frac{\bigl(\tau^2\delta^2-12\Delta_Vb^2\bigr)^3}
 {\Delta_V^2b^4
  (\tau^2-4\Delta_V)(\delta^2+4\Delta_V)}.
\end{equation}
The energy scale $E$ cancels.  The complexified parameter map to the
$j$-line is nonconstant and hence has Zariski-dense image.
\end{theorem}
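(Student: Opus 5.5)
The proof substitutes the matrix invariants into the binary-quartic $j$-formula \eqref{eq:j-binary-quartic} and then checks that the result is a nonconstant function of the parameters.

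\textbf{Reduction to the quartic.} For $n=2$, \cref{prop:birational-reduction} identifies the determinant curve with $y^2=Au^4+Bu^2+C$. On the locus \eqref{eq:generic-locus}, \cref{lem:polynomial-discriminant} gives $AC(B^2-4AC)\ne0$, so the quartic is nonsingular. Hence \eqref{eq:j-binary-quartic} applies, with its normalization taken from \cref{app:binary-quartics}.

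\textbf{The invariant $I$.} From \eqref{eq:ABC} we have $B^2=4E^2\tau^2\delta^2$. The computation in the proof of \cref{lem:polynomial-discriminant} gives
\[
 AC=E^2\bigl(\tau^2\delta^2-16\Delta_Vb^2\bigr).
\]
Therefore
\[
 I=B^2+12AC=16E^2\bigl(\tau^2\delta^2-12\Delta_Vb^2\bigr).
\]
I would double-check this step, since $B^2+12AC=4E^2\tau^2\delta^2+12E^2\tau^2\delta^2-192E^2\Delta_Vb^2$.

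\textbf{The denominator.} \eqref{eq:branch-discriminant} gives $B^2-4AC=64E^2\Delta_Vb^2$, so $(B^2-4AC)^2=2^{12}E^4\Delta_V^2b^4$. Also $AC=E^2(\tau^2-4\Delta_V)(\delta^2+4\Delta_V)$ directly from \eqref{eq:ABC}.

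\textbf{Assembling $j$.} The numerator is $16\cdot 2^{12}E^6(\tau^2\delta^2-12\Delta_Vb^2)^3$. The denominator is $E^6\cdot 2^{12}\Delta_V^2b^4(\tau^2-4\Delta_V)(\delta^2+4\Delta_V)$. The powers of $E$ and the factor $2^{12}$ cancel, which yields \eqref{eq:j-map} and shows that $E$ drops out.

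\textbf{Nonconstancy.} The canonical specialization $a=b=d=1$ gives $\tau=2$, $\delta=0$, $\Delta_V=2$, hence $j=16(-24)^3/(4\cdot(-4)\cdot 8)=1728$. A second point, for example $(a,b,d)=(2,1,3)$, gives $\tau=5$, $\delta=-1$, $\Delta_V=7$ and $j=16\cdot 83^3/(49\cdot(-3)\cdot 29)$, a different value. So the map is nonconstant.

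\textbf{Density.} The parameter space is irreducible, being an open subset of affine space. Its image under a nonconstant rational map to the irreducible curve $\PP^1_j$ is therefore a constructible set that is not a single point, so it is cofinite and in particular Zariski dense.

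\textbf{Main obstacle.} The only real risk is the normalization constant in \eqref{eq:j-binary-quartic}. I would take it from the appendix, and sanity-check it against the $j=1728$ expected for $Y^2=X^3+8X$ with $a=b=d=E=1$, which the computation above confirms.
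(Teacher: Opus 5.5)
Your proof is correct and follows the paper's argument. You substitute $I=16E^2(\tau^2\delta^2-12\Delta_Vb^2)$ and $B^2-4AC=64E^2\Delta_Vb^2$ into the binary-quartic formula, cancel $2^{12}E^6$, and then prove nonconstancy. For nonconstancy you compare two parameter points and use irreducibility plus Chevalley, where the paper uses the one-parameter slice $\delta=\Delta_V=1$ and its explicit $j(\tau)$.

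There is one arithmetic slip. At $(a,b,d)=(2,1,3)$ one has $\tau^2\delta^2-12\Delta_Vb^2=25-84=-59$, not $83$. So $j=16(-59)^3/\bigl(49\cdot(-3)\cdot 29\bigr)=3286064/4263$, which still differs from $1728$, and the nonconstancy conclusion is unaffected.
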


\begin{proof}
From \eqref{eq:ABC} and \eqref{eq:branch-discriminant},
\begin{align}
 I&=16E^2(\tau^2\delta^2-12\Delta_Vb^2),\label{eq:I-factor}\\
 B^2-4AC&=64E^2\Delta_Vb^2.\label{eq:D-factor}
\end{align}
Inserting these identities into \eqref{eq:j-binary-quartic} cancels $E^6$
and gives \eqref{eq:j-map}.

To see nonconstancy, take the algebraic slice
$\delta=\Delta_V=1$ and let $\tau$ vary.  It is realized after
complexification by
\[
 a=\frac{\tau+1}{2},\qquad
 d=\frac{\tau-1}{2},\qquad
 b^2=\frac{5-\tau^2}{4}.
\]
On this slice,
\begin{equation}\label{eq:j-slice}
 j(\tau)=
 256\frac{(4\tau^2-15)^3}
 {5(\tau^2-5)^2(\tau^2-4)},
\end{equation}
which is nonconstant.  The image of a nonconstant rational map between
irreducible curves is Zariski dense.
\end{proof}

The theorem does not claim that every complex $j$ is attained by real
coefficients, nor that a given $j$ has a unique preimage.  It says that the
elliptic modulus varies nontrivially within a fixed half-order response
class.  Thus the exponent $-1/2$ cannot select $j=1728$ by itself.

\subsection{Canonical Specialization}

At the point $a=b=d=E=1$,
\begin{equation}\label{eq:canonical-invariants}
 (\tau,\delta,\Delta_V)=(2,0,2),
\end{equation}
and \eqref{eq:elliptic-general-quartic}, after dividing the square coordinate
by four, is the quartic already found in \cref{sec:two-channel}:
\[
 v^2=2-u^4.
\]

\begin{proposition}[Canonical elliptic curve]\label{prop:canonical-curve}
The smooth projective model of $v^2=2-u^4$ is isomorphic over $\Q$ to
\begin{equation}\label{eq:E0}
 E_0:\qquad Y^2=X^3+8X.
\end{equation}
It has $j=1728$, discriminant $-32768$, conductor $256$, Cremona label
\texttt{256b2}, and LMFDB label \texttt{256.b2}.
\end{proposition}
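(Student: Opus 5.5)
The plan is to avoid guessing an explicit change of variables from the start. Instead I would use the fact that a genus-one curve with a rational point is isomorphic over $\Q$ to its Jacobian, and compute the Jacobian from the binary-quartic invariants fixed in \eqref{eq:IJ}.

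\textbf{Step 1 (the quartic is an elliptic curve over $\Q$).} By \cref{prop:half-order-elliptic}, the smooth projective model $C_0$ of $v^2=2-u^4$ has genus one. It has the rational point $(u,v)=(1,1)$, so $C_0$ is an elliptic curve over $\Q$ and is isomorphic over $\Q$ to its Jacobian.

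\textbf{Step 2 (compute the Jacobian).} Write the quartic as $f(U,W)=-U^4+2W^4$, so $A=-1$, $B=0$, $C=2$ in \eqref{eq:binary-quartic}. Then \eqref{eq:IJ} gives
\[
 I=B^2+12AC=-24,\qquad K=72ABC-2B^3=0.
\]
The Jacobian model $Y^2=X^3-27IX-27K$ quoted before \cref{thm:j-map} (and derived in \cref{app:binary-quartics}) becomes $Y^2=X^3+648X$. Since $648=3^4\cdot 8$, the substitution $X=9X'$, $Y=27Y'$ yields $Y'^2=X'^3+8X'$, which is $E_0$.

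\textbf{Step 2 (main obstacle).} This step needs care. The curves with $j=1728$ are the quartic twists $Y^2=X^3+DX$, with $D$ taken modulo fourth powers. So matching $j$ alone does not pin down the curve, and the $\Q$-rationality of the Jacobian formula is exactly what fixes the twist class. If the appendix's normalization is in doubt, I would cross-check by an explicit map. Send the rational point to infinity with $u=1+1/s$ and $v=w/s^2$, which gives $w^2=-s^4-4s^3-6s^2-4s+1$ after clearing denominators. Because the leading coefficient is not a square, I would instead use the standard cubic-model construction of Cassels based at the point over $s=0$, where $w^2=1$ is a square constant term. Then I would check that the resulting cubic is $\Q$-isomorphic to $Y^2=X^3+8X$, i.e. that its twist parameter differs from $8$ by a fourth power.

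\textbf{Step 3 (invariants of $E_0$).} For $Y^2=X^3+a_4X$ with $a_4=8$, we have $c_6=0$, which forces $j=1728$. The discriminant is $\Delta=-16\cdot 4a_4^3=-64\cdot 512=-32768=-2^{15}$. The same value $j=1728$ also follows from \cref{thm:j-map} at $(\tau,\delta,\Delta_V,b)=(2,0,2,1)$:
\[
 j=16\frac{(-24)^3}{4\cdot 1\cdot(-4)\cdot 8}=1728.
\]
Only the prime $2$ divides $\Delta$, so the conductor is a power of $2$. I would apply Tate's algorithm at $p=2$, after checking minimality (this needs $a_4=8$ to be unremovable, which holds since $2^4\nmid 8$), to obtain the exponent $8$, i.e. conductor $256$. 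Finally, I would identify the isogeny class and curve by comparing the minimal model $[0,0,0,8,0]$ with the Cremona and LMFDB tables, giving \texttt{256b2} and \texttt{256.b2}. Those labels are table lookups rather than derivations.
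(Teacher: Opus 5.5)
Your proof is correct, but it reaches the $\Q$-isomorphism by a different route from the paper.

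The paper never passes through the Jacobian. It writes down the explicit map \eqref{eq:canonical-forward} based at the rational point $(1,1)$, certifies it by the identity \eqref{eq:app-forward-verification}, and supplies the inverse \eqref{eq:app-inverse-map}. The isomorphism is therefore checked by substitution alone.

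You instead use two facts: a pointed genus-one curve is isomorphic to its Jacobian, and $y^2=f(u)$ has Jacobian $Y^2=X^3-27IX-27K$ over the ground field. With $I=-24$, $K=0$ and $648=3^4\cdot 8$, this lands exactly on $E_0$. You are right that this rationality statement, not $j=1728$, is what pins down the quartic twist.

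That statement is a classical theorem (Weil; An, Kim, Marshall, Marshall, McCallum and Perlis), and you should cite it as such. The paper's \cref{app:binary-quartics} does not derive it: it only uses $\mathcal E_f$ to read off $j$, which is twist-insensitive, and never shows that $\mathcal E_f$ is the Jacobian over $\Q$.

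Your route needs no guessed change of variables and applies verbatim to any determinant quartic $y^2=Au^4+Bu^2+C$ with a rational point. The paper's route is self-contained and yields an explicit isomorphism. Your invariant computations and table lookups match the paper's.

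There are two small slips, neither on the main line of the argument.
\begin{enumerate}
\item In the optional cross-check, $u=1+1/s$, $v=w/s^2$ gives $w^2=2s^4-(s+1)^4=s^4-4s^3-6s^2-4s-1$. Its leading coefficient is the square $1$, because the rational point now lies over $s=\infty$. The quartic you display is the one for $u=1+s$, $v=w$.
\item At $p=2$, the observation $2^4\nmid 8$ only rules out rescaling within short Weierstrass form, not general changes of coordinates with $r,s,t$. Minimality of $[0,0,0,8,0]$ instead follows from Tate's algorithm or Kraus's criterion: the would-be reduced pair $(c_4,c_6)=(-24,0)$ has $v_2(c_4)=3$ and is not realizable. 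The paper does not check minimality explicitly either.
\end{enumerate}
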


\begin{proof}
The quartic has the rational point $(1,1)$.  The rational map
\begin{equation}\label{eq:canonical-forward}
 X=-\frac{2(u^2+v-2)}{(u-1)^2},\qquad
 Y=\frac{4(-u^3+uv-2v+2)}{(u-1)^3}
\end{equation}
sends a dense open subset of $v^2=2-u^4$ to \eqref{eq:E0}.  Its rational
inverse is recorded and checked in \cref{app:binary-quartics}.  Hence the
smooth projective curves are isomorphic over $\Q$.  Substitution in
\eqref{eq:j-map} gives $j=1728$.  The discriminant, conductor, and labels
follow from the integral model and agree with the LMFDB entry
\cite{LMFDB256b2}.
\end{proof}

The order-four symmetry $(u,v)\mapsto(iu,-v)$, which fixes the two points at
infinity of the smooth quartic, is another quick explanation of $j=1728$ over
$\C$.  What matters for the present argument is how this symmetry entered: it
is a property of \eqref{eq:canonical-invariants}.
Generic coefficients at the same response exponent have neither this
automorphism nor this $j$-invariant.

The elliptic layer therefore illustrates both sides of the denominator
theorem.  The denominator fixes the genus, a coarse topological invariant.
The coefficient matrices retain the finer moduli.  A conformal response can
supply the first datum through a scaling dimension; it cannot, without additional
microscopic input, fix the second.

\section{Near-\texorpdfstring{$AdS_2$}{AdS2} Conformal Response}\label{sec:ads2}

We now apply the genus theorem to a narrow conformal-response problem.  The
input is an exact leading infrared monomial model.  It is not a complete
finite-temperature Green function or the global analytic continuation of a
physical response.

The power-law infrared Green function, its branch point, and its covering
space already occur in the standard near-$AdS_2$ matching analysis
\cite{FaulknerEtAl2011}.  The result below is the further, conditional step of
placing that monomial in the present two-channel determinant family.

\subsection{Conformal Scaling Limit}

Let the exact infrared model of a retarded response be
\begin{equation}\label{eq:conformal-response}
 \Sigma_{\rm IR}^R(\omega)
 =c_h[-i(\omega+i0)]^{2h_{\rm probe}-1},
 \qquad c_h\ne0,
\end{equation}
with the Fourier convention \eqref{eq:transform-conventions}.  On the
Laplace half-plane this is the boundary value of
\begin{equation}\label{eq:conformal-laplace-model}
 \Sigma_{\rm IR}(z)=c_hz^{2h_{\rm probe}-1}.
\end{equation}
Within the complexified determinant family, the nonzero coefficient and
retarded phase can be absorbed into the coupling coordinate.  They therefore
do not change the function field.  This change need not preserve a real
fixed-coupling slice or its pointwise Krein symmetry.  The response exponent
seen by the finite probe is
\begin{equation}\label{eq:beta-h}
 \beta=2h_{\rm probe}-1.
\end{equation}
For $0<h_{\rm probe}<1/2$, the monomial follows from the ordinary causal
kernel $\theta(t)t^{-2h_{\rm probe}}$.  For other dimensions, its
nonanalytic part requires distributional continuation and a specified
contact-term subtraction.

\begin{theorem}[Conformal probe genus]\label{thm:conformal-probe}
Assume the probe self-energy is exactly \eqref{eq:conformal-laplace-model},
the probe has the two-channel form \eqref{eq:intro-general-pencil}, and its
coefficients satisfy \eqref{eq:generic-locus}.  If
$2h_{\rm probe}=m/n$ is reduced, then the normalized determinant curve of
the leading infrared model, with the complexified coupling coordinate varied,
has
\begin{equation}\label{eq:conformal-genus}
 g_{\rm probe}=n-1=\den(2h_{\rm probe})-1.
\end{equation}
If $2h_{\rm probe}$ is irrational, the monomial response has infinite
monodromy and no compatible finite-sheeted response cover.
\end{theorem}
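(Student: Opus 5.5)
The plan is to reduce the statement to \cref{thm:denominator-genus} and \cref{thm:irrational-monodromy}, after disposing of the coefficient $c_h$.

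\emph{Absorbing the coefficient.} With $\Sigma_{\rm IR}(z)=c_hz^{\beta}$ and $\beta=2h_{\rm probe}-1$ as in \eqref{eq:beta-h}, the pencil is
\[
 zI_2-EJ-\lambda c_hz^\beta V .
\]
Since $c_h\neq0$, the substitution $\lambda'=c_h\lambda$ is a linear automorphism of the complexified coupling line. It carries the determinant locus in $(z,\lambda)$, or in $(u,\lambda)$ on any response cover, isomorphically onto the locus of the standard pencil \eqref{eq:intro-general-pencil} with coupling $\lambda'$. The normalized determinant curve of \cref{def:spectral-curve} is therefore unchanged up to isomorphism. This is the only place where the coupling is allowed to be complex; real slices are not needed, because genus is computed after complexification.

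\emph{The exponent.} Suppose $2h_{\rm probe}=m/n$ in lowest terms. Then
\[
 \beta=\frac{m-n}{n}.
\]
Because $\gcd(m-n,n)=\gcd(m,n)=1$, this fraction is also reduced, with the same denominator $n$ and numerator $r=m-n$. This is the key bookkeeping step: $\den(\beta)=\den(2h_{\rm probe})$.

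\emph{Applying the genus theorem.} The hypothesis \eqref{eq:generic-locus} is exactly the one required by \cref{thm:denominator-genus}. Applied to the reduced exponent $r/n$, it gives a normalized active curve of genus $n-1$. The case $r=0$ (that is, $m=n=1$, so $h_{\rm probe}=1/2$) is harmless: then $\beta=0$, $n=1$, and the theorem still gives genus $0$. By the numerator invariance in the same theorem, the answer does not depend on $m$.

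\emph{The irrational case.} If $2h_{\rm probe}$ is irrational, then so is $\beta$. \cref{thm:irrational-monodromy} then gives infinite-order monodromy of $z^\beta$ around $0$, and multiplying by the nonzero constant $c_h$ does not change the multiplier $e^{2\pi i\beta}$. The obstruction to a compatible finite-sheeted cover is therefore inherited directly.

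\emph{Main obstacle.} The only point needing care is conceptual rather than computational. One must make explicit that the rescaling by $c_h$, including any retarded phase, is an isomorphism of the joint complexified determinant locus, and say nothing about a fixed real coupling. One must also confirm that the reduced-denominator identity survives the shift by $-1$. Both are one-line verifications, so the proof is essentially a corollary of the earlier results.
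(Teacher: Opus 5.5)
Your proposal is correct and follows the paper's proof: it likewise writes $\beta=(m-n)/n$, uses $\gcd(m-n,n)=\gcd(m,n)=1$ to keep the reduced denominator $n$, and then invokes \cref{thm:denominator-genus} for the rational case and \cref{thm:irrational-monodromy} for the irrational case. The one difference is placement: you absorb $c_h$ and the retarded phase into the complexified coupling inside the proof, whereas the paper makes that remark in the paragraph just before the theorem.
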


\begin{proof}
When $2h_{\rm probe}=m/n$ is reduced,
\[
 \beta=2h_{\rm probe}-1=\frac{m-n}{n}.
\]
Since $\gcd(m-n,n)=\gcd(m,n)=1$, the reduced denominator of $\beta$ is still
$n$.  Equation \eqref{eq:conformal-genus} follows from
\cref{thm:denominator-genus}.  The irrational statement follows from
\cref{thm:irrational-monodromy} with the coordinate qualification following
that theorem.
\end{proof}

In near-$AdS_2$ applications, an infrared operator dimension may determine a
leading nonanalytic power of a boundary response.  The theorem assigns a
genus only to the determinant curve obtained by replacing that leading term
with the exact monomial \eqref{eq:conformal-laplace-model}.  A mere asymptotic
relation
\begin{equation}\label{eq:conformal-asymptotic-only}
 G_R(\omega)\sim c_h[-i(\omega+i0)]^{2h_{\rm probe}-1}
\end{equation}
does not determine the global genus of the completed response.  Subleading
powers, analytic terms, additional thresholds, and ultraviolet completion
may change or eliminate a finite algebraic curve.  Nor does
\eqref{eq:conformal-genus} assign a genus to the conformal field theory or to
the spacetime geometry.  No microscopic near-$AdS_2$ construction of the
two-channel coefficient matrix, and no derivation of the genericity condition
\eqref{eq:generic-locus}, is supplied here.

\subsection{Sachdev--Ye--Kitaev Scaling}

For the fundamental Majorana fermion in the leading large-$N$,
strong-coupling conformal saddle of the even-$q>2$ Sachdev--Ye--Kitaev model,
the scaling dimension is
\begin{equation}\label{eq:syk-delta}
 \Delta_\psi=\frac1q.
\end{equation}
The large-$N$ limit is taken before the zero-temperature limit.  Maldacena
and Stanford give the conformal time-domain retarded function
\cite{MaldacenaStanford2016}.  At zero temperature it is proportional to
\begin{equation}\label{eq:syk-zero-temperature-time}
 2b\cos(\pi\Delta_\psi)\,
 \theta(t)t^{-2\Delta_\psi}.
\end{equation}
With the Fourier sign fixed in \eqref{eq:transform-conventions}, direct
integration for $\omega\ne0$, or equivalently the retarded Abel limit, gives
\begin{equation}\label{eq:syk-zero-temperature-frequency}
 2b\cos(\pi\Delta_\psi)\Gamma(1-2\Delta_\psi)
 [-i(\omega+i0)]^{2\Delta_\psi-1}.
\end{equation}
The restriction $q>2$ ensures $0<\Delta_\psi<1/2$, hence local integrability
at $t=0$.  At infinity the Fourier integral is conditionally convergent for
$\omega\ne0$, not absolutely convergent; at $\omega=0$ it diverges, and the
display records the singular retarded boundary behavior.  The frequency-space
retarded function in \cite{MaldacenaStanford2016} contains an additional
conventional factor $-i$; it changes the overall phase, not the exponent.

\begin{corollary}[SYK probe hierarchy]\label{cor:syk-genus}
Under the hypotheses of \cref{thm:conformal-probe}, for the fundamental
fermion of the interacting even-$q>2$ model,
\begin{equation}\label{eq:syk-genus}
 g_{\rm probe}=\frac{q}{2}-1.
\end{equation}
In particular, $q=4$ gives genus one for the exact leading monomial probe
model.
\end{corollary}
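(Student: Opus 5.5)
The plan is to derive the corollary directly from \cref{thm:conformal-probe}, with the hypotheses supplying everything except one arithmetic step. The probe self-energy is the exact monomial \eqref{eq:conformal-laplace-model} with $h_{\rm probe}=\Delta_\psi=1/q$. This is justified by \eqref{eq:syk-zero-temperature-frequency}: the prefactor $2b\cos(\pi\Delta_\psi)\Gamma(1-2\Delta_\psi)$ is nonzero because $0<\Delta_\psi<1/2$, so it plays the role of $c_h\ne0$. By the remark preceding the theorem, such a nonzero constant, together with the retarded phase, can be absorbed into the complexified coupling coordinate $\lambda$ without changing the function field. The additional factor $-i$ used in \cite{MaldacenaStanford2016} is absorbed in the same way.

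Next comes the arithmetic. Here $2h_{\rm probe}=2/q$. For even $q>2$, write $q=2k$ with $k\ge2$, so that $2/q=1/k$, which is already reduced with denominator $k=q/2$. (For $q=4$ this gives $1/2$; in general the numerator $1$ is coprime to $k$.) \Cref{thm:conformal-probe} then gives $g_{\rm probe}=\den(2/q)-1=q/2-1$. As a check on the denominator bookkeeping, $\beta=2/q-1=(2-q)/q=(1-k)/k$, and $\gcd(1-k,k)=1$ confirms that $\beta$ has reduced denominator $k$, in agreement with the proof of the theorem. The case $q=4$ gives $k=2$ and genus one, matching \cref{thm:denominator-genus} in the elliptic case $n=2$.

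The only point needing care is the parity restriction. For odd $q$, the quantity $2/q$ has denominator $q$, which would give genus $q-1$; the corollary restricts to even $q$, so this case does not arise. The genericity condition \eqref{eq:generic-locus} is part of the assumed hypotheses and needs no verification here. I therefore expect no real obstacle: the main step is checking that the SYK normalization fits the form \eqref{eq:conformal-laplace-model} with $c_h\ne0$, and the rest is the reduction $2/q=1/(q/2)$.
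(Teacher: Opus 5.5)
Your proposal is correct and follows the same route as the paper, which notes only that $2\Delta_\psi=2/q$ has reduced denominator $q/2$ for even $q$ and then applies \cref{thm:conformal-probe}. Your additional checks — that $c_h\ne0$ and the $-i$ phase can be absorbed into the coupling coordinate, and that $\beta=(1-k)/k$ is reduced — are consistent with the paper but are already covered by the theorem's hypotheses and its proof.
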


\begin{proof}
The reduced denominator of $2\Delta_\psi=2/q$ is $q/2$ for even $q$.  Apply
\cref{thm:conformal-probe}.
\end{proof}

The symbol $h_{\rm probe}$ denotes the dimension of the response coupled to
the probe.  In the corollary it is specialized to the fundamental-fermion
dimension $\Delta_\psi=1/q$.  It must not be confused with the dimensions
$h_m$ in the SYK bilinear and four-point spectrum.  Those dimensions are
determined by a different kernel equation and are generally not $1/q$.  If
one of them is used as a probe dimension, it has its own rational denominator
or, when irrational, the infinite monodromy described in
\cref{sec:irrational}.

\subsection{Finite-Temperature Response}

At finite inverse temperature $\beta_T$, the conformal retarded propagator in
time is proportional to
\begin{equation}\label{eq:finite-T-retarded}
 \theta(t)\left[
 \frac{\pi}{\beta_T\sinh(\pi t/\beta_T)}
 \right]^{2\Delta_\psi}.
\end{equation}
For $0<\Delta_\psi<1/2$, its transform in our convention is proportional to
\begin{equation}\label{eq:finite-T-beta-function}
 \left(\frac{2\pi}{\beta_T}\right)^{2\Delta_\psi-1}
 B\left(
 \Delta_\psi-\frac{i\beta_T\omega}{2\pi},
 1-2\Delta_\psi
 \right).
\end{equation}
This is meromorphic in frequency, with a pole tower at
\begin{equation}\label{eq:finite-T-poles}
 \omega_k=-\frac{2\pi i}{\beta_T}(\Delta_\psi+k),
 \qquad k=0,1,2,\ldots.
\end{equation}
It is not a pure power on a finite cyclic cover.  Consequently,
\eqref{eq:syk-genus} is a zero-temperature statement about the joint
determinant family of the leading monomial model, not a finite-temperature
topology theorem or a fixed-coupling spectrum.

For $q=4$, the exponent in \eqref{eq:syk-zero-temperature-frequency} is
$-1/2$, so the generic two-channel monomial model is elliptic.  Nothing in
$\Delta_\psi=1/4$ fixes the probe invariants
$(\tau,\delta,\Delta_V)$.  It therefore fixes neither the $j$-invariant nor a
particular elliptic curve.  The canonical value $j=1728$ in
\cref{prop:canonical-curve} is one matrix specialization in a nonconstant
moduli family.  The exponent likewise determines no entropy, black-hole
charge, or complete quasinormal-mode spectrum.

\section{Local Spectral Interpolation}\label{sec:interpolation}

The determinant curve answers where the pencil is singular.  Away from that
curve, its inverse answers a different question: which two-dimensional
characteristic polynomials can be obtained by evaluating the resolvent?  The
half-order family reduces this inverse problem to a quadratic equation.

The half-order trace--determinant calculation and the interpolation
quadratic appeared in preliminary form in \cite{Liu2026LocalHP}.  They are
recalled here in inverse-problem form to delimit what prescribed local data
the pencil can carry; the general arbitrary-denominator geometry is
developed in \cref{sec:rational-exponents,sec:genus}.

\subsection{Trace--Determinant Map}

Fix $E=1$ and retain a general real $J$-self-adjoint matrix
\begin{equation}\label{eq:interpolation-general-V}
 V=\begin{pmatrix}a&b\\-b&d\end{pmatrix}.
\end{equation}
On the half-order cover, put
\begin{equation}\label{eq:interpolation-general-pencil}
 \cA_V(u,\lambda)=u^2I_2-J-\frac{\lambda}{u}V,
 \qquad
 P_V(u,\lambda)=u^2\det\cA_V(u,\lambda).
\end{equation}
Direct expansion gives
\begin{equation}\label{eq:interpolation-P}
 P_V=u^6-\tau\lambda u^3-u^2-\delta\lambda u
 +\Delta_V\lambda^2.
\end{equation}
Whenever $P_V\ne0$, let $R_V=\cA_V^{-1}$.  The raw adjugate is
\begin{equation}\label{eq:interpolation-adjugate}
 \operatorname{adj}\cA_V=
 \begin{pmatrix}
 u^2+1-d\lambda/u&b\lambda/u\\
 -b\lambda/u&u^2-1-a\lambda/u
 \end{pmatrix},
\end{equation}
and therefore
\begin{equation}\label{eq:general-R-tr-det}
 \tr R_V=\frac{u(2u^3-\tau\lambda)}{P_V},
 \qquad
 \det R_V=\frac{u^2}{P_V}.
\end{equation}

\begin{theorem}[Master interpolation quadratic]\label{thm:master-interpolation}
Let $t,\kappa\in\C$ with $\kappa\tau\ne0$, and put $Y=u^2$.  On $u\ne0$, the
conditions
\begin{equation}\label{eq:general-matching}
 \tr R_V=t,
 \qquad
 \det R_V=\kappa
\end{equation}
are equivalent to
\begin{equation}\label{eq:general-lambda-solution}
 \lambda=\frac{2u^3}{\tau}-\frac{tu}{\kappa\tau}
\end{equation}
and
\begin{equation}\label{eq:master-quadratic}
 A_2Y^2+B_2Y+C_2=0,
\end{equation}
where
\begin{align}
 A_2&=\tau^2-4\Delta_V,\notag\\
 B_2&=-\frac{t(\tau^2-4\Delta_V)}\kappa+2\delta\tau,
 \label{eq:master-coefficients}\\
 C_2&=\tau^2-\frac{t\delta\tau}\kappa
 -\frac{\Delta_Vt^2}{\kappa^2}+\frac{\tau^2}{\kappa}.
 \notag
\end{align}
Every nonzero root of \eqref{eq:master-quadratic}, together with either
square root $u$ and \eqref{eq:general-lambda-solution}, gives a regular
resolvent satisfying \eqref{eq:general-matching}.  In particular,
\begin{equation}\label{eq:general-prescribed-polynomial}
 \det(I_2-TR_V)=1-tT+\kappa T^2.
\end{equation}
\end{theorem}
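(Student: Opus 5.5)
The plan is to solve the two matching conditions \eqref{eq:general-matching} one after the other, using the closed forms \eqref{eq:general-R-tr-det}. The determinant condition fixes the value of $P_V$. The trace condition is then linear in $\lambda$. Substituting the resulting $\lambda$ back into the polynomial $P_V$ of \eqref{eq:interpolation-P} leaves a single equation in $u$, which should be the quadratic \eqref{eq:master-quadratic} in $Y=u^2$.

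\emph{Forward direction.} Assume $u\ne0$, $P_V\ne0$ and \eqref{eq:general-matching}, and write $s=t/\kappa$.
\begin{enumerate}
\item Since $\kappa\ne0$, the condition $\det R_V=\kappa$ is equivalent to $P_V=u^2/\kappa$.
\item Inserting this into $\tr R_V=t$ and clearing $P_V$ gives $u(2u^3-\tau\lambda)=t u^2/\kappa$.
\item Dividing by $u\ne0$ and using $\tau\ne0$ gives $\lambda=u(2u^2-s)/\tau$, which is \eqref{eq:general-lambda-solution}.
\item The remaining content is the single equation $P_V(u,\lambda)=u^2/\kappa$ with this $\lambda$ substituted.
\end{enumerate}

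\emph{The key computation.} Multiply that equation by $\tau^2$ and divide by $u^2$. This is legitimate because every term of $P_V$ other than $u^6$ and $-u^2$ carries a factor $\lambda$, and $\lambda$ now carries a factor $u$. After expansion one expects
\[
 -(\tau^2-4\Delta_V)Y^2+\bigl(s(\tau^2-4\Delta_V)-2\delta\tau\bigr)Y-\tau^2+\delta\tau s+\Delta_Vs^2-\frac{\tau^2}{\kappa}=0 .
\]
Multiplying by $-1$ should reproduce the coefficients \eqref{eq:master-coefficients} exactly. This bookkeeping is the main obstacle, and it has two sources. First, the cross term $-\delta\lambda u$ contributes $-\delta\tau(2Y-s)$. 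Second, $\Delta_V\lambda^2$ contributes $\Delta_V(2Y-s)^2$. I would check each monomial in $Y$ separately.

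\emph{Converse.} Let $Y$ be a nonzero root of \eqref{eq:master-quadratic}, choose either square root $u$, and define $\lambda$ by \eqref{eq:general-lambda-solution}.
\begin{enumerate}
\item Running the same identity backwards gives $P_V=u^2/\kappa$.
\item Since $u\ne0$, this is nonzero, so $\cA_V$ is invertible and the resolvent is regular.
\item It follows that $\det R_V=\kappa$.
\item By the choice of $\lambda$, $u(2u^3-\tau\lambda)=tu^2/\kappa=tP_V$, so $\tr R_V=t$.
\end{enumerate}
The root $Y=0$ is excluded because it would force $u=0$, which lies outside the active locus.

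\emph{Final identity.} For any $2\times2$ matrix $R$ one has $\det(I_2-TR)=1-T\tr R+T^2\det R$. Applying this with the prescribed trace $t$ and determinant $\kappa$ gives \eqref{eq:general-prescribed-polynomial}.
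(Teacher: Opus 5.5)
Your proposal is correct and follows the paper's proof step for step. The paper also rewrites $\det R_V=\kappa$ as $P_V=u^2/\kappa$, solves the trace condition for $\lambda$, substitutes back, cancels $u^2$ and collects in $Y$ (reversibly for $\kappa\tau u\ne0$), and finishes with $\det(I_2-TR)=1-T\tr R+T^2\det R$. Your intermediate equation is exactly $-1$ times \eqref{eq:master-quadratic}, and your converse, showing $P_V=u^2/\kappa\ne0$ so the resolvent is regular, simply spells out the reversibility that the paper asserts in one line.
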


\begin{proof}
The determinant equation in \eqref{eq:general-matching} is
$P_V=u^2/\kappa$.  Inserting it in the trace equation
\eqref{eq:general-R-tr-det} yields
\eqref{eq:general-lambda-solution}.  Substitution into $P_V=u^2/\kappa$,
cancellation of $u^2$, and collection in $Y=u^2$ gives
\eqref{eq:master-quadratic}.  Each step is reversible for
$\kappa\tau u\ne0$.
Finally, every two-by-two matrix satisfies
\[
 \det(I_2-TR_V)=1-T\tr R_V+T^2\det R_V.
\]
\end{proof}

The theorem gives an algebraic solvability criterion, not unconditional
surjectivity for every coefficient matrix.  A nonzero root of the master
quadratic is required, and its degree can drop on special parameter loci.
The canonical real slice below is useful because one root is positive
throughout the Hasse region.

\subsection{Canonical Real Slice}

Return to the matrix $V_c$ of \eqref{eq:canonical-data}.  Write
\begin{equation}\label{eq:R-def}
 R(u,\lambda)=\cA_c(u;\lambda)^{-1}
\end{equation}
off the determinant curve.  Here $(\tau,\delta,\Delta_V)=(2,0,2)$, and
\eqref{eq:general-R-tr-det} becomes
\begin{equation}\label{eq:R-tr-det}
 \tr R=\frac{2u(u^3-\lambda)}{P_c(u,\lambda)},
 \qquad
 \det R=\frac{u^2}{P_c(u,\lambda)}.
\end{equation}

\begin{theorem}[Canonical real interpolation]\label{thm:real-interpolation}
Let $\kappa>1$ and $t\in\R$ satisfy $|t|\leq2\sqrt\kappa$.  Put
\begin{equation}\label{eq:D-tkappa}
 D_{t,\kappa}=4\kappa(\kappa+1)-t^2,
 \qquad
 Y_\pm=\frac{t\pm\sqrt{D_{t,\kappa}}}{2\kappa}.
\end{equation}
Then $Y_+>0$ and $Y_-<0$.  With
\begin{equation}\label{eq:interpolation-basepoint}
 u=\sqrt{Y_+},
 \qquad
 \lambda=u^3-\frac{tu}{2\kappa},
\end{equation}
the resolvent is regular and satisfies
\begin{equation}\label{eq:interpolation-result}
 \tr R=t,
 \qquad
 \det R=\kappa,
 \qquad
 \det(I_2-TR)=1-tT+\kappa T^2.
\end{equation}
\end{theorem}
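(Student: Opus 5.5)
The plan is to obtain this as the canonical-slice specialization of \cref{thm:master-interpolation}, so that the only new work is a sign analysis of the roots. For $V_c$ we have $(\tau,\delta,\Delta_V)=(2,0,2)$, and $\kappa>1$ gives $\kappa\tau=2\kappa\ne0$, so that theorem applies.

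First I compute the coefficients \eqref{eq:master-coefficients} on this slice:
\[
A_2=-4,\qquad B_2=\frac{4t}{\kappa},\qquad C_2=4+\frac4\kappa-\frac{2t^2}{\kappa^2}.
\]
Multiplying \eqref{eq:master-quadratic} by $-\kappa^2/4$ turns it into
\[
\kappa^2Y^2-t\kappa Y+\frac{t^2}{2}-\kappa^2-\kappa=0.
\]
The quadratic formula then gives discriminant $\kappa^2\bigl(4\kappa(\kappa+1)-t^2\bigr)=\kappa^2D_{t,\kappa}$, and the roots are exactly $Y_\pm$ of \eqref{eq:D-tkappa}. Similarly, \eqref{eq:general-lambda-solution} with $\tau=2$ becomes $\lambda=u^3-tu/(2\kappa)$, which is \eqref{eq:interpolation-basepoint}.

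The main step is the sign claim $Y_+>0>Y_-$. I will get it from the product of the roots,
\[
Y_+Y_-=\frac{t^2/2-\kappa(\kappa+1)}{\kappa^2}.
\]
The hypothesis $|t|\le2\sqrt\kappa$ gives $t^2/2\le2\kappa$. Since $\kappa>1$, we have $2\kappa<\kappa^2+\kappa$, so the product is strictly negative. This also shows $D_{t,\kappa}>0$, because $t^2\le4\kappa<4\kappa(\kappa+1)$. Hence both roots are real, nonzero and of opposite sign. As $Y_+\ge Y_-$, it follows that $Y_+>0>Y_-$. In particular $u=\sqrt{Y_+}$ is a positive real number with $u\ne0$, and $\lambda$ is real.

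Finally, $Y_+$ is a nonzero root, so \cref{thm:master-interpolation} applies to $u$ and this $\lambda$. It yields $\tr R=t$ and $\det R=\kappa$, together with the prescribed polynomial \eqref{eq:general-prescribed-polynomial}. Regularity holds explicitly because $P_c=u^2/\kappa\ne0$, so the point lies off the determinant curve and $R$ exists. I expect the only delicate point to be verifying that the hypothesis range ($\kappa>1$ together with $|t|\le2\sqrt\kappa$) indeed forces a negative product of roots; that is the inequality $2\kappa<\kappa^2+\kappa$, where $\kappa>1$ is used in an essential way.
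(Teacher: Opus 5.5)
Your proposal is correct and follows essentially the paper's proof: specialize the master quadratic to $(\tau,\delta,\Delta_V)=(2,0,2)$, identify its roots as $Y_\pm$, establish their signs, and invoke \cref{thm:master-interpolation}, with regularity coming from $P_c=u^2/\kappa=Y_+/\kappa\ne0$. The paper phrases the sign step as $\sqrt{D_{t,\kappa}}\geq2\kappa>|t|$ rather than through the product of roots, but both arguments reduce to $t^2<2\kappa(\kappa+1)$, so this is only a cosmetic difference.
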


\begin{proof}
In the canonical specialization, \eqref{eq:master-quadratic} is equivalent
to
\begin{equation}\label{eq:canonical-master-quadratic}
 2\kappa^2Y^2-2\kappa tY+t^2-2\kappa(\kappa+1)=0,
\end{equation}
whose roots are \eqref{eq:D-tkappa}.  The assumed bound gives
$\sqrt{D_{t,\kappa}}\geq2\kappa>|t|$, hence the stated signs.  The positive
root is nonzero and $P_c=Y_+/\kappa\ne0$.  The remaining identities follow from
\cref{thm:master-interpolation}.
\end{proof}

If $E/\Q$ is an elliptic curve with good reduction at a prime $p$, the Hasse
bound $|a_p|\leq2\sqrt p$ permits the substitution
\begin{equation}\label{eq:euler-input}
 (t,\kappa)=(a_p,p).
\end{equation}
It gives
\begin{equation}\label{eq:euler-interpolation}
 \det(I_2-p^{-s}R_p)
 =1-a_pp^{-s}+p^{1-2s}.
\end{equation}
Both coefficients in the right-hand side were supplied in
\eqref{eq:euler-input}; the identity is inverse interpolation.

\begin{proposition}[Input non-selection]\label{prop:input-nonselection}
For every assignment of real numbers $t_p$ satisfying
$|t_p|\leq2\sqrt p$, the same canonical pencil has prime-dependent real
basepoints for which
\begin{equation}\label{eq:arbitrary-hasse-interpolation}
 \det(I_2-TR_p)=1-t_pT+pT^2.
\end{equation}
At each such basepoint, $R_p$ is conjugate over $\C$ to
\begin{equation}\label{eq:companion-matrix}
 C_p=\begin{pmatrix}0&-p\\1&t_p\end{pmatrix}.
\end{equation}
Consequently, the fixed pencil and the existence of matching basepoints do
not select an arithmetic trace sequence.
\end{proposition}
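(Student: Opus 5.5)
The plan is to apply \cref{thm:real-interpolation} once for each prime, with the input pair $(t,\kappa)=(t_p,p)$, and then obtain the conjugacy from a discriminant computation.

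\textbf{Basepoints.} Every prime satisfies $p\geq2>1$, and by hypothesis $t_p$ is real with $|t_p|\leq2\sqrt p$. So the hypotheses of \cref{thm:real-interpolation} hold. The theorem produces
\[
 Y_+=\frac{t_p+\sqrt{4p(p+1)-t_p^2}}{2p}>0,
 \qquad
 u_p=\sqrt{Y_+}>0,
 \qquad
 \lambda_p=u_p^3-\frac{t_pu_p}{2p}.
\]
Both $u_p$ and $\lambda_p$ are real and depend on $p$. At $(u_p,\lambda_p)$ the resolvent $R_p$ is regular, with $\tr R_p=t_p$ and $\det R_p=p$. The identity $\det(I_2-TR)=1-T\tr R+T^2\det R$, valid for any $2\times2$ matrix, then gives \eqref{eq:arbitrary-hasse-interpolation}. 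The pencil $\cA_c$ is the same for every $p$; only the basepoint moves.

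\textbf{Conjugacy.} The companion matrix $C_p$ has $\tr C_p=t_p$ and $\det C_p=p$, so $R_p$ and $C_p$ share the characteristic polynomial $X^2-t_pX+p$. Two $2\times2$ matrices with the same characteristic polynomial are conjugate over $\C$ if that polynomial has distinct roots, or if both matrices are non-scalar. The discriminant is $t_p^2-4p\leq0$.

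When the inequality is strict, the roots are distinct and conjugacy follows immediately.

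The boundary case $t_p=\pm2\sqrt p$ is the step that needs care. Here there is a double eigenvalue $\pm\sqrt p$. The matrix $C_p$ is never scalar because of its off-diagonal entry $1$, so it suffices to show $R_p$ is not scalar; equivalently, that $\cA_c(u_p;\lambda_p)=R_p^{-1}$ is not scalar. The off-diagonal entries of $\cA_c$ are $\mp\lambda_p/u_p$. So $R_p$ is scalar only if $\lambda_p=0$, that is $u_p^2=t_p/(2p)$. Then the diagonal entries become $u_p^2\mp1$, which differ, so $\cA_c$ is still not scalar. Hence $R_p$ is non-scalar, and it is conjugate to $C_p$ in this case as well.

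\textbf{Non-selection.} The assignment $p\mapsto t_p$ was arbitrary subject only to the Hasse-type bound. Every such sequence is therefore realized by the single fixed pencil at suitable basepoints. Existence of matching basepoints thus places no constraint beyond $|t_p|\leq2\sqrt p$, and in particular does not single out the $a_p$ of any elliptic curve.
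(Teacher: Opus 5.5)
Your proof is correct and follows the paper's route: apply \cref{thm:real-interpolation} independently at each prime with $(t,\kappa)=(t_p,p)$, match the characteristic polynomials of $R_p$ and $C_p$, and rule out a scalar $R_p$ in the boundary case $t_p^2=4p$. The only difference is minor. The paper computes $\lambda_p/u_p=Y_+-t_p/(2p)=1$ to get nonzero off-diagonal entries, whereas your case split shows $\cA_c$ is never scalar; in fact its diagonal entries always differ by $2$, so no boundary computation is needed.
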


The companion realization of a prescribed polynomial is classical; explicit
structured constructions provide one representative reference
\cite{ChanCorless2016}.  Here the additional calculation only locates such a
realization inside the fixed pencil family.

\begin{proof}
Apply \cref{thm:real-interpolation} independently at each $p$.  The matrices
$R_p$ and $C_p$ have the same characteristic polynomial.  If
$t_p^2\ne4p$, both are diagonalizable with the same two eigenvalues.  If
$t_p^2=4p$, the selected basepoint has
\[
 \frac{\lambda_p}{u_p}
 =Y_+-\frac{t_p}{2p}=1,
\]
so the off-diagonal entries of $\cA_c$ and $R_p$ are nonzero.  Neither matrix
is scalar; both have the same single Jordan block and are again conjugate.
\end{proof}

\subsection{Inverse and Forward Problems}

The information flow in this section is one-directional.  The determinant
curve constrains singular evaluations and organizes analytic continuation.
It does not determine the target polynomial supplied to an inverse problem:
in \eqref{eq:euler-interpolation} the prime enters only through the supplied
coefficients, and no prime sequence, Euler product, or global operator
follows from the identity.  A forward arithmetic construction would have to
produce a trace sequence from fixed operator data or from specified
dynamics.  Solving for a basepoint after the coefficients are known instead
characterizes local response matrices compatible with prescribed
characteristic data.

\section{Discussion}\label{sec:discussion}

\subsection{Topology and Channel Dimension}

The calculation begins with a finite matrix but does not end with the usual
finite-dimensional characteristic polynomial.  The reason is now explicit.
The response $z^{r/n}$ first requires the cyclic cover $z=u^n$.  The
two-channel determinant is then quadratic in the combined coordinate
$q=\lambda u^r$.  Its discriminant supplies a second cover, branched at
$2n$ points on the generic locus.  The genus $n-1$ is the Riemann--Hurwitz
record of these two operations.

Neither part can be discarded.  The reduced denominator fixes the order of
the response monodromy, but the two-channel determinant supplies the double
cover.  A one-channel pencil has only a rational graph for $\lambda$ and does
not obey the same genus formula.  For three or more channels the determinant
is generically of higher degree in $q$, and its branch cycles need not reduce
to a hyperelliptic problem.  Its unmarked generic genus belongs to classical
matrix-polynomial spectral-curve theory, as recalled below.  The present
detailed result concerns the two-channel class, including the preferred
coupling coordinate and its boundary data, rather than a new
dimension-independent genus law.

The curve uses $\lambda$ as a variable complexified coupling coordinate.  If
one fixes a physical coupling $\lambda_0$, the remaining zeros in $z$ form a
fiber of this family and do not carry the genus $n-1$.  The theorem classifies
the joint determinant geometry of the pencil, not the discrete spectrum of a
single fixed-coupling operator.

The numerator has a subtler role.  It disappears from the normalized
function field after the birational coordinate change, but it remains in the
divisor of the original coupling parameter and in the boundary singularity
of a raw plane closure.  Equations \eqref{eq:lambda-divisor-orders},
\eqref{eq:negative-r-closure}, and \eqref{eq:positive-r-closure} make this
distinction concrete.  Topological equivalence of normalizations does not
mean equality of their preferred embeddings or observables.

\subsection{Higher-Channel Context}\label{sec:higher-channel-context}

The generic genus has a short higher-channel interpretation.  Let
$J=\diag(j_1,\ldots,j_N)$ with $j_i\in\{1,-1\}$, let
$H_0=\diag(E_1,\ldots,E_N)$ have distinct nonzero entries, and take
$V=JS$ with $S^\top=S$.  On the rational response cover, put
$w=\lambda u^r$.  The active determinant curve is
\begin{equation}\label{eq:higher-channel-curve}
 \det(u^nI_N-H_0-wV)=0.
\end{equation}
When $V$ is invertible, this is equivalently
\[
 \det\bigl(V^{-1}(u^nI_N-H_0)-wI_N\bigr)=0.
\]
It is therefore a size-$N$, degree-$n$ matrix-polynomial spectral curve.
On the generic regular locus, Vivolo's formula \cite{Vivolo2000} gives
\begin{equation}\label{eq:higher-channel-generic-genus}
 g_N=\frac{(N-1)(Nn-2)}{2}.
\end{equation}
Thus \eqref{eq:main-genus} is the specialization $N=2$.  Equation
\eqref{eq:higher-channel-generic-genus} is cited here as a classical
result, not claimed as a new genus theorem.  The change
$w=\lambda u^r$ also shows why the unmarked function field is independent
of the reduced numerator, while the distinguished function
$\lambda=wu^{-r}$ retains its cyclic character and divisor.

The indefinite two-energy slice gives a useful singular specialization of
the same background.

\begin{proposition}[Indefinite two-energy slice]\label{prop:higher-channel-EJ}
Let
\[
 J=\diag(I_{p_+},-I_{p_-}),\qquad
 p_+,p_-\geq1,\qquad p_++p_-=N,
\]
and set $H_0=EJ$ with $E\ne0$.  For every $n\geq2$, there is a
nonempty Zariski-open subset of matrices $V=JS$, $S^\top=S$, on which
the normalization of \eqref{eq:higher-channel-curve} is geometrically
integral and has genus
\begin{equation}\label{eq:higher-channel-EJ-genus}
 \boxed{g_{EJ}=np_+p_--N+1.}
\end{equation}
\end{proposition}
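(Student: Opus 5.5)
The plan is to reduce \eqref{eq:higher-channel-curve} to a base curve in the variables $(z,w)$ with $z=u^n$. I would compute the genus of that plane curve first and then pull it back along the cyclic cover $z=u^n$.

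\emph{Base curve.} Consider
\[
C_0:\ \det(zI_N-EJ-wV)=0,\qquad V=JS.
\]
This is a plane curve of total degree $N$. Its top homogeneous part is $\det(zI_N-wV)$, so its $N$ points at infinity correspond to the eigenvalues of $V$. For generic $S$ these eigenvalues are distinct and nonzero, which makes $C_0$ transverse to the line at infinity.

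The structure $H_0=EJ$ forces two special points. At $z=E$ the diagonal matrix $zI-EJ$ vanishes on the $p_+$ block. Hence $(E,0)$ is a point of multiplicity $p_+$, with tangent cone $\det\bigl((z-E)I_{p_+}-wV_{++}\bigr)$. Likewise $(-E,0)$ has multiplicity $p_-$, with tangent cone governed by $V_{--}$.

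For generic $S$ the diagonal blocks $V_{++}$ and $V_{--}$ have distinct nonzero eigenvalues. The tangent cones then consist of distinct lines, so both singularities are ordinary. I expect $C_0$ to be irreducible and smooth elsewhere. The genus formula for plane curves with ordinary singularities then gives
\[
g_0=\tfrac{(N-1)(N-2)}2-\tfrac{p_+(p_+-1)}2-\tfrac{p_-(p_--1)}2=p_+p_--N+1 .
\]

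\emph{Cyclic pullback.} On the normalization $\widetilde C_0$, consider the function $z$. Generically it has $N$ simple zeros, namely the points with $\det(-EJ-wV)=0$, which has $N$ distinct roots in $w$. It also has $N$ simple poles at the points at infinity. The normalized curve of \eqref{eq:higher-channel-curve} is the normalization of the fibre product $u^n=z$ over $\widetilde C_0$.

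Every zero and pole of $z$ is simple, so the degree-$n$ cyclic cover is totally ramified at these $2N$ points. The cover is irreducible, since a function with a simple zero is not a $d$-th power for any $d>1$. Riemann--Hurwitz then gives
\[
2g-2=n(2g_0-2)+2N(n-1),
\]
and hence $g=np_+p_--N+1$, which is \eqref{eq:higher-channel-EJ-genus}. This step mirrors the computation for $\pi_z$ in \eqref{eq:pi-z-total-ramification}.

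\emph{Genericity.} Each condition above is the nonvanishing of a polynomial in the entries of $S$. These conditions are: the eigenvalues of $V$, $V_{++}$ and $V_{--}$ are distinct and nonzero, $\det(EJ+wV)$ has distinct roots, and $C_0$ has no further singular points. They therefore cut out a Zariski-open set, and it remains to show that set is nonempty.

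The main obstacle is the last condition: proving that $C_0$ is irreducible with no singularities beyond $(\pm E,0)$ for some explicit $S$. I would first try a perturbation argument. Start from a rank-one-plus-diagonal $S$, or from a tridiagonal $S$ with small off-diagonal entries coupling the $+$ and $-$ blocks, and compute the $z$-discriminant of the $w$-polynomial explicitly.

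If that becomes messy, a fallback is available. Lower semicontinuity of geometric genus, together with the upper bound $g_0\le p_+p_--N+1$ forced by the two multiplicity-$p_\pm$ points, reduces the problem to exhibiting a single $S$ with exactly this genus. That could be checked for $N=2$ against \cref{thm:denominator-genus}, which gives $n-1=n\cdot1-2+1$. The general case would then follow by induction on $N$, adding a weakly coupled channel.
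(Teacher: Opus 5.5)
Your architecture is the paper's. You take the base curve $C_0:\det(zI_N-EJ-wV)=0$ with forced ordinary $p_\pm$-fold points at $(\pm E,0)$ and genus $(p_+-1)(p_--1)=p_+p_--N+1$. You then pull back along $u^n=z$, which is totally ramified exactly at the $N$ simple zeros and $N$ simple poles of $z$. Your tangent-cone computation, the Kummer argument for irreducibility of the pullback, and the Riemann--Hurwitz arithmetic are all correct.

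The gap is the step you flag yourself, and it carries the real content of the proposition. You never produce one $S$ for which $C_0$ is integral with no singular points besides $(\pm E,0)$, so the Zariski-open set is not shown to be nonempty. Neither proposed route closes this as written:
\begin{itemize}
\item A symbolic $z$-discriminant is not something you can carry out uniformly in $N$.
\item The induction on $N$ does not bypass the difficulty. At zero coupling the enlarged curve is $C_0^{(N-1)}\cup L$, which is reducible, so semicontinuity of geometric genus over integral fibers says nothing. You would still have to show that switching on the coupling smooths the points where $L$ meets $C_0^{(N-1)}$ away from the forced point, without creating new singularities.
\end{itemize}
Relatedly, ``no further singular points'' is not the nonvanishing of one polynomial in $S$, because $(\pm E,0)$ are always singular. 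It is nevertheless open: where these points have multiplicity exactly $p_\pm$ and reduced tangent cones, it amounts to smoothness of the strict transform in the blow-up of $\PP^2$ at these two fixed points.

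The missing idea is to degenerate all the way to diagonal $S$, i.e.\ $V=\diag(v_1,\dots,v_N)$ with distinct generic $v_i$. Then $C_0$ is the line arrangement $z=E+v_iw$ ($i$ in the positive block) and $z=-E+v_jw$ ($j$ in the negative block).
\begin{itemize}
\item Same-sign lines meet only at $(\pm E,0)$, which are therefore exactly the two ordinary multiple points.
\item Distinct slopes make the arrangement transverse to the line at infinity.
\item Each cross-sign pair meets in a single node at $w=2E/(v_j-v_i)\neq0$. Distinct slopes already force these $p_+p_-$ nodes to be distinct and to lie on no third line.
\end{itemize}
Now perturb by $\epsilon S_1$, with $S_1$ symmetric and all off-diagonal entries nonzero. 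Near the node of lines $i,j$, a Schur complement onto the $\{i,j\}$ block shows that, up to a unit, the equation becomes $\ell_i'\ell_j'+\epsilon^2w^2(S_1)_{ij}^2+O(\epsilon^3)=0$. Here $\ell_i',\ell_j'$ are the two linear factors perturbed by $O(\epsilon^2)$. Since $w\neq0$ there, every node is smoothed. The points $(\pm E,0)$ remain ordinary $p_\pm$-fold points because $V_{++}$ and $V_{--}$ stay regular semisimple. Upper semicontinuity of the delta invariant rules out new singularities near them, and smoothness elsewhere is an open condition. The normalization is then $N$ spheres joined by $p_+p_-$ tubes along the complete bipartite graph $K_{p_+,p_-}$, which is connected. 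Hence $C_0$ is irreducible of genus $p_+p_--N+1$. This is the nonemptiness argument the paper uses, and with it the rest of your proof goes through unchanged.
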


\begin{proof}
First consider the degree-$N$ projective base curve
\[
 B:\quad \det(XI_N-EZJ-WV)=0.
\]
On $W=0$ it has the two forced points
$P_+=(E:0:1)$ and $P_-=(-E:0:1)$.  Relative to the positive and negative
blocks of $J$, their tangent cones are, up to nonzero factors,
\[
 \det(\xi I_{p_+}-WV_{++}),\qquad
 \det(\eta I_{p_-}-WV_{--}).
\]
For generic $V$, the two diagonal blocks are regular semisimple.  Hence
$P_+$ and $P_-$ are ordinary multiple points of multiplicities $p_+$ and
$p_-$.  The required generic locus is nonempty: start with a diagonal
$V$, whose same-sign eigenvalue lines meet only at the two forced points,
and perturb its symmetric representative $S=JV$ by generic off-diagonal
entries.  This smooths the cross-sign intersections while preserving the
two ordinary multiple points.  The cross-sign incidence graph is connected,
so smoothing all its edges makes the nearby fiber irreducible.  No further
singularity persists on a generic projective fiber.

The normalization $B^\nu$ consequently has genus
\begin{align*}
 g(B^\nu)
 &=\frac{(N-1)(N-2)}2
   -\frac{p_+(p_+-1)+p_-(p_--1)}2 \\
 &=(p_+-1)(p_--1).
\end{align*}
Equivalently, this is Vivolo's normal-crossing correction after the
projective change $t=1/W$, $y=X/W$, which writes the base curve as
$\det(V+tEJ-yI_N)=0$.

On the same generic locus, $x=X/Z$ has $N$ simple zeros and $N$ simple
poles on $B^\nu$.  Equation \eqref{eq:higher-channel-curve} is the
cyclic pullback $u^n=x$, totally ramified at those $2N$ points and nowhere
else.  Riemann--Hurwitz gives
\[
 2g_{EJ}-2
 =n\bigl(2(p_+-1)(p_--1)-2\bigr)+2N(n-1),
\]
which reduces to \eqref{eq:higher-channel-EJ-genus}.
\end{proof}

The proposition is a structured consequence of the classical singular
spectral-curve genus correction and a cyclic-cover calculation.  It is not a
classification of repeated channel energies, singular $V$, or reducible
strata.  Definite signature is excluded: then $H_0$ is scalar and the base
determinant generically splits into eigenvalue lines after diagonalizing
$V$.

\subsection{Finite Channels and Nonlocal Memory}

The channel space in \eqref{eq:volterra} is finite-dimensional.  Its state at
one time nevertheless does not determine its future without a memory
history.  The power law is the Laplace transform of a causal kernel with a
long tail, or the limit of eliminating increasingly broad continuum baths.
The finite matrix is therefore not a finite-state Markovian system in
disguise.  Its spectral curve retains analytic information about degrees of
freedom that have been eliminated, while remaining a joint determinant family
rather than a fixed-coupling spectrum.

The cutoff construction in \cref{sec:causal-response} also marks the operator
boundary of the model.  At every finite cutoff, the Schur complement is an
ordinary bounded algebraic elimination; with an indefinite bath signature it
also has the reciprocal Krein realization \eqref{eq:krein-bath-generator}.
The exact uncut measure has
infinite total mass, so the constant coupling vector leaves the bath Hilbert
space.  The limiting power law is proved here only as a locally uniform
Stieltjes limit of cutoff Schur complements.  Constructing it as an uncut
singular or form-coupled parent operator would require additional domain and
closedness results.  This distinction matters whenever spectral properties of
a parent operator are to be inferred from the reduced pencil.

\subsection{Local and Global Analytic Structure}

The rational and irrational cases meet in a deliberately nonuniform way.
On any fixed logarithmic domain away from zero and at fixed coupling, rational exponents converge
to an irrational exponent together with all local determinant data needed
for Rouche's theorem.  Isolated spectral zeros are stable there.  At the same
time, the reduced denominators of rational approximants are unbounded, so the
genera of their generic compact curves tend to infinity.  Local analytic
convergence therefore coexists with a discontinuous global topological
classification.

This is not a paradox.  Increasing denominators add sheets that record more
global monodromy circuits, whereas a fixed logarithmic chart does not identify
the resulting compact global covers.  The determinant branch points need not
approach the monodromy center.  It would be incorrect to infer a limiting
compact surface or a convergent period matrix from
\cref{prop:rational-approximation}.  The stable object is the local analytic
pencil and its zero divisor on the chosen domain.

The degenerate strata give a second form of nonuniformity.  The denominator
sets a generic upper value, while matrix coefficients can collide branch
points, lower the degree, or split the curve.  The odd-multiplicity
square-class part determines the normalized genus; the full multiplicity
data determine whether the plane model has a node, cusp, tacnode, or
nonreduced component.  Both are needed for a spectral interpretation.

\subsection{Conformal Probe Models}

In an exact monomial infrared model, a conformal dimension fixes the reduced
response denominator and hence the generic genus of a two-channel probe.
This converts an analytic scaling exponent into a topological invariant of a
specified reduced model.  It does not convert an asymptotic expansion into a
global algebraic curve.  A physical response may contain contact terms,
several fractional thresholds, logarithms, or a meromorphic thermal pole
tower.  Each can change the completion on which genus is defined.

The SYK specialization illustrates the boundary sharply.  The fundamental
zero-temperature conformal fermion has $\Delta_\psi=1/q$, so the exact leading
monomial model gives $g_{\rm probe}=q/2-1$ for interacting even $q$.  The
bilinear tower has different dimensions, and the finite-temperature response
is a beta-function ratio.  Neither is described by simply retaining the same
genus.  The result is therefore a controlled probe statement adjacent to
near-$AdS_2$ physics, not a theorem about the full gravitational spectrum.

\subsection{Moduli and Interpolation}

The denominator controls a coarse invariant.  It does not fix the point in
moduli space.  At $n=2$, \cref{thm:j-map} gives a nonconstant rational
$j$-map in the matrix coefficients.  The value $1728$ and the curve
$Y^2=X^3+8X$ occur on one explicit slice.  Other coefficients with the same
half-order response give other elliptic moduli or lie on degenerate strata.

The interpolation results are complementary.  They show that resolvent
evaluations can carry arbitrary prescribed quadratic data on a broad real
domain.  Their very flexibility prevents them from being a
selection principle.  The coefficient sequence enters through the chosen
basepoints.  A forward construction would need additional dynamics or a
global compatibility condition that produces those basepoints rather than
solving for them after the target is supplied.

\subsection{Extensions}

Several extensions are directly testable without changing the conceptual
separation used here.  A finite sum of rational powers lives on a cyclic
cover determined by the least common denominator, but its determinant branch
polynomial is more general than \eqref{eq:Qn-repeat}; its genus must be
recomputed from the actual discriminant.  Higher channel number replaces the
quadratic extension by a higher-degree covering and requires its full branch
cycle data.  Rational or polynomial corrections on the same response cover
remain algebraic, but can add branch points and change genus.  Irrational
powers, logarithms, and thermal gamma functions generally leave the class of
finite algebraic response covers.

These tests clarify what is robust.  Causal elimination explains the origin
of multivalued coefficients.  Algebraic normalization determines the global
topology only after the exact response and coefficient locus are fixed.
Neither step by itself supplies arithmetic selection or a complete
black-hole spectral theory.

\section{Outlook}\label{sec:outlook}

A separate problem concerns quasinormal-mode equations whose exact or
semiclassical analysis is organized by Seiberg--Witten curves.  The natural
operator question is whether such a curve can be derived from a fixed
Feshbach--Schur reduction, with the horizon and asymptotic sectors providing
the eliminated continua and a finite set of retained channels providing the
probe block.

For this to be more than a formal similarity, four ingredients would be
needed.  One must specify a closed parent operator with the physical boundary
conditions, choose a projection independent of the spectral root, derive the
causal self-energy on a controlled domain, and prove that the reduced
determinant has the same normalized function field as the proposed
Seiberg--Witten curve.  The map must also preserve the quasinormal-mode
condition rather than only the shape of a polynomial after a
spectral-parameter-dependent substitution.

The continuum-bath construction in \cref{sec:causal-response} supplies a
model for one step of such a derivation, but it does not identify the required
black-hole parent operator or projection.  Establishing that identification
is an independent question.  No quasinormal-mode--Seiberg--Witten derivation
is claimed in the present paper.

\appendix
\section{Determinant Identities}\label{app:determinants}

This appendix records the algebraic eliminations used in the main text.  The
calculations begin with the raw matrix entries; no genus formula is assumed.

\subsection{Two-Channel Determinant}

On the reduced cyclic cover $z=u^n$, $z^{r/n}=u^r$, the pencil is
\begin{equation}\label{eq:app-lifted-matrix}
 \cA_{r/n}(u^n;\lambda)=
 \begin{pmatrix}
 u^n-E-a\lambda u^r&-b\lambda u^r\\
 b\lambda u^r&u^n+E-d\lambda u^r
 \end{pmatrix}.
\end{equation}
Multiplying the diagonal entries and adding the off-diagonal product gives
\begin{align}
 \det\cA_{r/n}
 &=(u^n-E)(u^n+E)
 -\lambda u^r\{d(u^n-E)+a(u^n+E)\}
 +(ad+b^2)\lambda^2u^{2r}\notag\\
 &=u^{2n}-E^2
 -\lambda u^r(\tau u^n+E\delta)
 +\Delta_V\lambda^2u^{2r}.
 \label{eq:app-raw-determinant}
\end{align}
The sign $+b^2$ in $\Delta_V=ad+b^2$ comes from the $J$-self-adjoint
off-diagonal pair $b,-b$.

Set $q=\lambda u^r$ and
\begin{equation}\label{eq:app-Bn}
 B_n(u)=\tau u^n+E\delta.
\end{equation}
Then the determinant locus is
\begin{equation}\label{eq:app-q-equation}
 F_n(u,q):=\Delta_Vq^2-B_n(u)q+u^{2n}-E^2=0.
\end{equation}
If $\Delta_V\ne0$, direct expansion gives the polynomial identity
\begin{equation}\label{eq:app-square-identity}
 [2\Delta_Vq-B_n(u)]^2-Q_n(u)=4\Delta_VF_n(u,q),
\end{equation}
where
\begin{equation}\label{eq:app-Qn}
 Q_n(u)=B_n(u)^2-4\Delta_V(u^{2n}-E^2).
\end{equation}
Thus $y=2\Delta_Vq-B_n$ gives the double-cover model without taking a
square root of a rational expression.  The inverse is
\begin{equation}\label{eq:app-q-inverse}
 q=\frac{y+B_n(u)}{2\Delta_V},
 \qquad
 \lambda=u^{-r}\frac{y+B_n(u)}{2\Delta_V}.
\end{equation}
These formulas hold on $u\ne0$ and extend as rational maps between the
normalizations.

When $\Delta_V=0$, \eqref{eq:app-q-equation} is linear:
\begin{equation}\label{eq:app-linear-q}
 q=\frac{u^{2n}-E^2}{\tau u^n+E\delta},
\end{equation}
provided the denominator is not identically zero.  Applying
\eqref{eq:app-square-identity} on this locus would replace a rational graph by
a noninvertible doubled equation; this is why the case is separated before
completion of the square.  For the real coefficient class used here, an
identically zero denominator on $\Delta_V=0$ means $V=0$; then the determinant
is $u^{2n}-E^2$ and the components are vertical in the coupling coordinate.

\subsection{Low-Denominator Models}

The specialization $E=1$, $(a,b,d)=(2,1,3)$ has
\begin{equation}\label{eq:app-generic-invariants}
 (\tau,\delta,\Delta_V)=(5,-1,7),
 \qquad
 (A,B,C)=(-3,-10,29).
\end{equation}
For the negative representatives $r=-1$ and $n=1,\ldots,4$, the raw
determinants are
\begin{equation}\label{eq:app-low-raw}
 F_{-1,n}=u^{2n}-1
 -\lambda u^{-1}(5u^n-1)+7\lambda^2u^{-2},
\end{equation}
while every completed model is
\begin{equation}\label{eq:app-low-Q}
 y^2=-3u^{2n}-10u^n+29.
\end{equation}
The two roots in $t=u^n$ are
\begin{equation}\label{eq:app-low-t-roots}
 t_\pm=\frac{-5\pm4\sqrt7}{3}.
\end{equation}
They are distinct and nonzero, so their $n$th roots give $2n$ simple branch
points.  The resulting genera are $0,1,2,3$.  The computation uses the same
matrix coefficients at every denominator; no parameter is retuned to force
the pattern.

\subsection{One-Channel Comparison}

For comparison, a scalar pencil
\begin{equation}\label{eq:app-one-channel}
 A^{(1)}(z;\lambda)=z-E-\lambda v z^{r/n},
 \qquad v\ne0,
\end{equation}
has, on $z=u^n$, the active determinant graph
\begin{equation}\label{eq:app-one-channel-graph}
 \lambda=\frac{u^n-E}{vu^r}.
\end{equation}
Its normalization is rational for every $n$.  The denominator--genus theorem
therefore uses both the cyclic response cover and the quadratic determinant
of the two-channel class.

\subsection{Resolvent Elimination}

For the half-order pencil with $E=1$, multiplication of
\eqref{eq:interpolation-adjugate} by $u^2/P_V$ gives
\eqref{eq:general-R-tr-det}.  To eliminate a prescribed pair $(t,\kappa)$,
first
write
\begin{equation}\label{eq:app-det-match}
 P_V=\frac{u^2}{\kappa}.
\end{equation}
The trace equation then reads
\begin{equation}\label{eq:app-trace-match}
 \frac{\kappa(2u^3-\tau\lambda)}u=t,
\end{equation}
which is \eqref{eq:general-lambda-solution}.  Substitution into
\eqref{eq:interpolation-P} yields
\begin{align}
 0={}&(\tau^2-4\Delta_V)Y^2
 +\left[-\frac{t(\tau^2-4\Delta_V)}\kappa+2\delta\tau\right]Y
 \notag\\
 &+\tau^2-\frac{t\delta\tau}\kappa
 -\frac{\Delta_Vt^2}{\kappa^2}+\frac{\tau^2}{\kappa},
 \label{eq:app-master-expanded}
\end{align}
after multiplication by a nonzero scalar.  This gives
\eqref{eq:master-quadratic} from the raw determinant and also identifies its
exceptional hypotheses: $u$, $\kappa$, and $\tau$ must be nonzero before the
elimination is reversed.

In the canonical case $(\tau,\delta,\Delta_V)=(2,0,2)$,
\eqref{eq:app-master-expanded}, multiplied by $-\kappa^2/2$, becomes
\begin{equation}\label{eq:app-canonical-master}
 2\kappa^2Y^2-2\kappa tY+t^2-2\kappa(\kappa+1)=0.
\end{equation}
Its discriminant in $Y$ is
$4\kappa^2[4\kappa(\kappa+1)-t^2]$, giving the roots in
\eqref{eq:D-tkappa}.

\section{Branch Points and Normalization}\label{app:normalization}

\subsection{Generic Compactification}

Assume $A C(B^2-4AC)\ne0$.  Let
\begin{equation}\label{eq:app-tpm}
 t_\pm=\frac{-B\pm\sqrt{B^2-4AC}}{2A}.
\end{equation}
Then
\begin{equation}\label{eq:app-Q-factor}
 Q_n(u)=A(u^n-t_+)(u^n-t_-).
\end{equation}
After choosing $n$th roots, the finite branch divisor of the projection to
$u$ is
\begin{equation}\label{eq:app-branch-divisor}
 \left\{\zeta_n^k t_+^{1/n}:0\leq k<n\right\}
 \mathbin{\cup}
 \left\{\zeta_n^k t_-^{1/n}:0\leq k<n\right\},
\end{equation}
where $\zeta_n=e^{2\pi i/n}$.  The two orbits are disjoint because
$t_+\ne t_-$, and none contains zero.

The affine equation $y^2=Q_n(u)$ is smooth: a singular point would satisfy
$y=Q_n=Q_n'=0$, contradicting square-freeness.  Its ordinary projective
plane closure is generally singular at infinity for $n>1$, so the
smooth-plane degree formula is not applicable.  Instead use the two affine
charts
\begin{align}
 y^2&=Au^{2n}+Bu^n+C,\label{eq:app-finite-chart}\\
 v^2&=A+Bt^n+Ct^{2n},
 \qquad t=u^{-1},\quad v=yu^{-n}.
 \label{eq:app-infinity-chart}
\end{align}
The second chart has two smooth points $(t,v)=(0,\pm\sqrt A)$.  Gluing these
charts and normalizing gives the smooth projective double cover used in
\cref{thm:denominator-genus}.

At a simple root $u_0$ of $Q_n$, a local coordinate $s=y$ satisfies
$u-u_0=c s^2+O(s^4)$ with $c\ne0$, so the ramification index of $\pi_u$ is
two.  At infinity, $t$ itself is a local coordinate and the two points are
unramified.  This proves directly that the branch divisor has degree $2n$.

For the map $z=u^n$, the local parameter at either point above $u=0$ gives
$z=u^n$, hence index $n$.  At infinity, $z=t^{-n}$ gives the same index.  At
each point in \eqref{eq:app-branch-divisor}, $z-z_0$ is a nonzero scalar
multiple of $s^2$ to leading order, hence index two.  These local models give
the ramification count \eqref{eq:pi-z-total-ramification} without relying on
a picture of the sheets.

\subsection{Pole Clearing and Saturation}

The operator pencil is defined on the active locus $u\ne0$ when the lifted
response has a pole.  A polynomial equation obtained by multiplying its
determinant by a power of $u$ must therefore be interpreted as a closure of
that punctured locus.  Algebraically, if $P(u,\lambda)$ is a cleared equation,
the relevant ideal is
\begin{equation}\label{eq:app-saturation}
 (P):u^\infty
 =\{f:\text{some }u^kf\text{ lies in }(P)\}.
\end{equation}
This removes any component supported entirely at the pole divisor.  Isolated
boundary points that are limits of the active locus remain and are then
resolved by normalization.

For $r=-s<0$, the cleared equation is
\begin{equation}\label{eq:app-negative-closure-repeat}
 P_-(u,\lambda)=u^{2s}(u^{2n}-E^2)
 -\lambda u^s(\tau u^n+E\delta)
 +\Delta_V\lambda^2.
\end{equation}
Its discriminant in $\lambda$ is
\begin{equation}\label{eq:app-negative-discriminant}
 \operatorname{Disc}_\lambda(P_-)=u^{2s}Q_n(u).
\end{equation}
When $C\ne0$, the two local roots have the form
\begin{equation}\label{eq:app-negative-branches}
 \lambda_\pm(u)=c_\pm u^s+O(u^{s+n}),
 \qquad c_+\ne c_-.
\end{equation}
After an analytic change of coordinates, their union is
$Y^2=X^{2s}$, an $A_{2s-1}$ singularity.  The normalization separates the
two branches.  For $s=1$ their tangents are distinct and the point is a node;
for $s>1$ the branches are tangent.

For $r=s>0$, the finite-$\lambda$ chart does not display this boundary.  In
$\mu=1/\lambda$, multiplication by $\mu^2$ gives
\begin{equation}\label{eq:app-positive-closure-repeat}
 P_+(u,\mu)=(u^{2n}-E^2)\mu^2
 -u^s(\tau u^n+E\delta)\mu
 +\Delta_Vu^{2s}.
\end{equation}
Its discriminant is again $u^{2s}Q_n(u)$, so the same singularity occurs over
$\lambda=\infty$.  This calculation is the plane-model counterpart of the
divisor orders \eqref{eq:lambda-divisor-orders}.

\subsection{Square-Class Normalization}

Suppose $Q=H^2S$ as in \eqref{eq:squareclass-factor}.  The rational map
\begin{equation}\label{eq:app-squareclass-map}
 (u,y)\longmapsto (u,Y=y/H(u))
\end{equation}
identifies the function fields of $y^2=Q$ and $Y^2=S$ whenever $S$ is
nonconstant.  Zeros of $H$ are precisely where this map resolves repeated
branch points.  The normalization is determined by $S$, but the local plane
singularity at a root of multiplicity $m$ remains analytically equivalent,
up to a unit, to
\begin{equation}\label{eq:app-local-multiplicity}
 y^2=x^m.
\end{equation}
Thus $m=2$ is a node, $m=3$ is a cusp, and higher $m$ gives the corresponding
$A_{m-1}$ singularity.  This is why replacing $Q$ by its polynomial radical
loses information in two ways: it retains even factors that should disappear
from the function field, and it erases their multiplicities as singularity
data.

\subsection{Principal Degenerations}

When $A=0$ and $BC\ne0$, the equation is
\begin{equation}\label{eq:app-A-zero}
 y^2=Bu^n+C.
\end{equation}
Its finite roots are simple.  If $n$ is odd, the single point at infinity is
ramified; if $n$ is even, there are two unramified points.  In both cases the
genus is $\lfloor(n-1)/2\rfloor$.

When $C=0$ and $AB\ne0$, the local equation at zero is
\begin{equation}\label{eq:app-C-zero-local}
 y^2=u^n(B+Au^n).
\end{equation}
Since the parenthesis is a unit, this has type $A_{n-1}$ for $n\geq2$.
Dividing out the largest square power of $u$ gives square-class degree $n$
for even $n$ and $n+1$ for odd $n$.

If $B^2-4AC=0$ with $AC\ne0$, then
\begin{equation}\label{eq:app-perfect-square}
 Q_n(u)=A(u^n-t_0)^2,
 \qquad t_0=-\frac{B}{2A}\ne0.
\end{equation}
The reduced curve consists of two rational components
$y=\pm\sqrt A(u^n-t_0)$.  They meet transversely at the $n$ roots of
$u^n=t_0$, giving $n$ ordinary nodes in the affine plane model.  In the
matrix parameterization with $E\Delta_V\ne0$, this discriminant condition is
equivalent to $b=0$.

Finally, if $A=C=0$ and $B\ne0$, then $Q_n=Bu^n$.  For odd $n$, removal of the
largest square leaves $S=u$ and one rational component.  For even $n$, the
polynomial is a square and the reduced curve has two rational components.
If $A=B=C=0$, the equation is nonreduced and does not define a smooth spectral
curve before taking its reduced components.

On $\Delta_V=0$ with $V\ne0$, the linear equation
\eqref{eq:app-linear-q} gives the main rational component.  At a common zero of
$u^{2n}-E^2$ and $\tau u^n+E\delta$, the unreduced equation can contain a
vertical component.  Such components must be stated separately rather than
passed through the noninvertible square-completion map.  When $V=0$, the
vertical components over the roots of $u^{2n}=E^2$ are the whole determinant
locus.

\section{Binary Quartics and \texorpdfstring{$j$}{j}-Invariants}
\label{app:binary-quartics}

\subsection{Invariant Convention}

For the even binary quartic
\begin{equation}\label{eq:app-binary-quartic}
 f(U,W)=AU^4+BU^2W^2+CW^4,
\end{equation}
we use
\begin{equation}\label{eq:app-IK}
 I=B^2+12AC,
 \qquad
 K=72ABC-2B^3.
\end{equation}
These are the usual binary-quartic invariants specialized to vanishing cubic
and linear coefficients.  Direct expansion gives
\begin{equation}\label{eq:app-invariant-identity}
 4I^3-K^2=432AC(B^2-4AC)^2.
\end{equation}
The quartic is nonsingular precisely when
$AC(B^2-4AC)\ne0$; its univariate discriminant is
\begin{equation}\label{eq:app-quartic-discriminant}
 \operatorname{Disc}_u(Au^4+Bu^2+C)
 =16AC(B^2-4AC)^2.
\end{equation}

One Jacobian convention for the genus-one curve $y^2=f(u,1)$ is
\begin{equation}\label{eq:app-jacobian-model}
 \mathcal E_f:\qquad
 Y^2=X^3-27IX-27K.
\end{equation}
For a Weierstrass equation $Y^2=X^3+aX+b$,
\begin{equation}\label{eq:app-weierstrass-j}
 j=1728\frac{4a^3}{4a^3+27b^2}.
\end{equation}
Applying this to \eqref{eq:app-jacobian-model} and using
\eqref{eq:app-invariant-identity} gives
\begin{equation}\label{eq:app-quartic-j}
 j=\frac{6912I^3}{4I^3-K^2}
 =16\frac{I^3}{AC(B^2-4AC)^2}.
\end{equation}
This fixes all normalization factors in \eqref{eq:j-binary-quartic}; no
comparison by numerical $j$-values is needed.

\subsection{Matrix-Parameter Substitution}

For the determinant quartic, the coefficients are
\begin{equation}\label{eq:app-quartic-ABC}
 A=\tau^2-4\Delta_V,
 \quad B=2E\tau\delta,
 \quad C=E^2(\delta^2+4\Delta_V).
\end{equation}
Using $\tau^2-\delta^2=4ad$ and $\Delta_V=ad+b^2$, one obtains
\begin{align}
 B^2-4AC&=64E^2\Delta_Vb^2,
 \label{eq:app-quartic-D-factor}\\
 I&=16E^2(\tau^2\delta^2-12\Delta_Vb^2).
 \label{eq:app-quartic-I-factor}
\end{align}
Substitution into \eqref{eq:app-quartic-j} yields
\begin{equation}\label{eq:app-full-j-map}
 j=16
 \frac{(\tau^2\delta^2-12\Delta_Vb^2)^3}
 {\Delta_V^2b^4
  (\tau^2-4\Delta_V)(\delta^2+4\Delta_V)}.
\end{equation}
The powers of $E$ cancel.  Thus changing the overall spectral scale leaves
the elliptic modulus unchanged, while changing the dimensionless matrix
invariants generally moves the curve in moduli.

The map is not constant.  On the complexified slice
$\delta=\Delta_V=1$,
\begin{equation}\label{eq:app-j-slice-realization}
 a=\frac{\tau+1}{2},
 \qquad
 d=\frac{\tau-1}{2},
 \qquad
 b^2=\frac{5-\tau^2}{4},
\end{equation}
formula \eqref{eq:app-full-j-map} becomes
\begin{equation}\label{eq:app-j-slice}
 j(\tau)=256\frac{(4\tau^2-15)^3}
 {5(\tau^2-5)^2(\tau^2-4)}.
\end{equation}
This nonconstant rational function also shows why the half-order exponent
does not select a single elliptic curve.

\subsection{Canonical Birational Maps}

The canonical quartic is
\begin{equation}\label{eq:app-canonical-quartic}
 v^2=2-u^4.
\end{equation}
On the dense open set $u\ne1$, define
\begin{equation}\label{eq:app-forward-map}
 X=-\frac{2(u^2+v-2)}{(u-1)^2},
 \qquad
 Y=\frac{4(-u^3+uv-2v+2)}{(u-1)^3}.
\end{equation}
A direct common-denominator calculation gives
\begin{align}
 Y^2-X^3-8X
 ={}&\frac{8(5u^2-8u+v+2)}{(u-1)^6}
 (u^4+v^2-2).
 \label{eq:app-forward-verification}
\end{align}
Hence \eqref{eq:app-forward-map} maps the quartic to
\begin{equation}\label{eq:app-E0-repeat}
 E_0:\qquad Y^2=X^3+8X.
\end{equation}

Conversely, on $2X+Y+8\ne0$, put
\begin{equation}\label{eq:app-inverse-map}
 u=\frac{4X+Y-8}{2X+Y+8},
 \qquad
 v=\frac{-2X^3+24X^2+Y^2+48Y+64}{(2X+Y+8)^2}.
\end{equation}
Substitution gives $v^2+u^4-2=0$ on
$Y^2=X^3+8X$.  Substituting
\eqref{eq:app-forward-map} into \eqref{eq:app-inverse-map}, and conversely,
gives the identity wherever both maps are defined.  The maps therefore
identify the smooth projective curves over $\Q$.

For \eqref{eq:app-E0-repeat}, the standard Weierstrass formulas
\cite{Silverman2009} give
\begin{equation}\label{eq:app-E0-invariants}
 \Delta=-16\cdot4\cdot8^3=-32768,
 \qquad j=1728.
\end{equation}
Its conductor $256$, Cremona label \texttt{256b2}, and LMFDB label
\texttt{256.b2} are external database data, not consequences of the response
exponent alone; they agree with the cited LMFDB record \cite{LMFDB256b2}.

\section{Computational Verification}\label{app:verification}

The accompanying verification starts from raw matrix definitions rather than
from the closed branch polynomial.  Its purpose is to catch sign,
normalization, and degeneration errors; the proofs in the paper do not depend
on floating-point output.

\subsection{Exact Scripts}

Three computer-algebra implementations are included:
\begin{equation}\label{eq:app-verification-files}
 \begin{split}
 &\texttt{verification/verify\_full.sage},\\
 &\texttt{verification/verify\_algebra.py},\\
 &\texttt{verification/verify\_n\_channel.sage}.
 \end{split}
\end{equation}
The first uses SageMath \cite{SageMath}.  It constructs
\begin{equation}\label{eq:app-verification-raw-matrix}
 u^nI_2-E\diag(1,-1)
 -\lambda u^r\begin{pmatrix}a&b\\-b&d\end{pmatrix}
\end{equation}
and asks Sage to form its determinant and discriminant.  Only afterward are
these objects compared with the displayed formulas.  The SymPy script
rebuilds the same matrix independently and checks the exact trinomial
discriminant, cyclic coupling character, boundary charts and their local
$A$-type models, and quartic invariants.
The two two-channel implementations do not share generated algebraic
expressions.  The third script starts again from raw $N\times N$ matrices.
It checks the classical generic genus and the indefinite two-energy
specialization through $N=5$, including negative numerators, normalization
genera, irreducibility, Newton polygons, and representative torus
nondegeneracy systems.  It is a regression test for
\cref{sec:higher-channel-context}, not evidence of novelty.

The exact checks are summarized in \cref{tab:verification}.

\begin{table}[ht]
\centering
\caption{Definition-level verification.  All entries are exact unless noted.}
\label{tab:verification}
\begin{tabular}{@{}p{0.31\textwidth}p{0.57\textwidth}@{}}
\toprule
Object & Independent check\\
\midrule
Raw determinant & Direct $2\times2$ determinant for $n=1,\ldots,6$ and
positive and negative reduced numerators\\
Birational model & Discriminant extracted from the raw quadratic and
complete-the-square identity checked coefficientwise\\
Coupling decoration & Cyclic character and the nonzero local units determining
the coupling-function orders at zero and infinity\\
Generic genus & Exact branch counts and Sage genera $0,1,\ldots,5$ for one
fixed generic matrix; separate infinity chart\\
Degenerate loci & Degree drop, diagonal splitting, $\Delta_V=0$, $C=0$, and
odd-multiplicity square-class reduction\\
Higher-channel context & Raw determinants through $N=5$; generic genera and
the $H_0=EJ$ values in Proposition~\ref{prop:higher-channel-EJ}\\
Boundary models & Both $\lambda$ and $1/\lambda$ chart discriminants and
completed-square $A_{2|r|-1}$ models for $|r|=1,\ldots,4$\\
Elliptic moduli & Binary-quartic syzygy, matrix $j$-map, branch cross-ratio,
and both directions of the canonical birational map\\
Interpolation & General master quadratic and exact raw-resolvent examples in
quadratic number fields\\
\bottomrule
\end{tabular}
\end{table}

The canonical elliptic model is also constructed as a Sage elliptic curve.
Sage returns $j=1728$, discriminant $-32768$, conductor $256$, and Cremona
label \texttt{256b2}; the last two items are cross-checked against the LMFDB
record.  A 200-bit branch cross-ratio evaluation is included as an auxiliary
check of the exact rational $j$ formula.  It is the only floating-point
assertion and is not used to establish the theorem.

\subsection{Reproduction Commands}

From the manuscript directory, the complete computational check is
\begin{verbatim}
sage verification/verify_full.sage
python3 verification/verify_algebra.py
sage verification/verify_n_channel.sage
python3 figures/generate_figures.py
\end{verbatim}
The final verified environment used SageMath 10.9, Python 3.9.6, SymPy 1.14.0,
NumPy 1.26.4, and Matplotlib 3.9.2.  No network access or downloaded data are
needed.  Successful execution prints separate \texttt{PASS} lines for the
raw determinants, branch geometry, degenerations, elliptic maps,
coupling decoration, interpolation, and figure data.

\subsection{Figure Provenance}

All figures are generated by
\texttt{figures/generate\_figures.py}.  Vector PDF files are used in the
manuscript, with high-resolution PNG copies retained for review.  Figure~1
uses only $z=u^2$ and $v^2=2-u^4$.  Figure~2 reconstructs the roots of
\begin{equation}\label{eq:app-figure-Q}
 Q_n=-3u^{2n}-10u^n+29,
 \qquad n=1,2,3,4,
\end{equation}
and asserts that exactly $2n$ distinct branch points were found.  Figure~3
enumerates reduced rational exponents with denominator at most $24$ and plots
$g=n-1$.  It also displays a rational sequence converging to $1/2$ whose
denominators, and hence generic genera, diverge.  The plots are expository
representations of exact data, not numerical evidence for the theorems.

The generator uses a colorblind-safe palette together with redundant marker
and line styles, so branch orbits remain distinguishable in grayscale.  The
source data, generation script, and publication PDFs are kept together to
make each graphical claim reproducible.

\section*{Acknowledgments}
This work was supported by the National High-Level Overseas Talent Program (KS21400126), the Suzhou Talent project (ZXP2025057), the Jiangsu Distinguished Professorship Fund (SR21400225), and the Research Start-up Fund (NH21400525).

\bibliographystyle{amsplain}
\bibliography{references}

\providecommand{\bysame}{\leavevmode\hbox to3em{\hrulefill}\thinspace}
\providecommand{\MR}{\relax\ifhmode\unskip\space\fi MR }
\providecommand{\MRhref}[2]{%
  \href{http://www.ams.org/mathscinet-getitem?mr=#1}{#2}
}
\providecommand{\href}[2]{#2}
\begin{thebibliography}{10}

\bibitem{ChanCorless2016}
Eunice Y.~S. Chan and Robert~M. Corless, \emph{{Narayana}, {Mandelbrot}, and a
  new kind of companion matrix}, arXiv:1606.09132, 2016.

\bibitem{DussonSigalStamm2021}
Genevi{\`e}ve Dusson, Israel~Michael Sigal, and Benjamin Stamm, \emph{The
  {Feshbach--Schur} map and perturbation theory}, Partial Differential
  Equations, Spectral Theory, and Mathematical Physics, EMS Series of Congress
  Reports, EMS Press, 2021, pp.~65--88.

\bibitem{FaulknerEtAl2011}
Thomas Faulkner, Hong Liu, John McGreevy, and David Vegh, \emph{Emergent
  quantum criticality, {Fermi} surfaces, and {AdS}$_2$}, Physical Review D
  \textbf{83} (2011), no.~12, 125002.

\bibitem{Fisher2008}
Tom~A. Fisher, \emph{The invariants of a genus one curve}, Proceedings of the
  London Mathematical Society \textbf{97} (2008), no.~3, 753--782.

\bibitem{GutierrezShaska2005}
J.~Gutierrez and T.~Shaska, \emph{Hyperelliptic curves with extra involutions},
  LMS Journal of Computation and Mathematics \textbf{8} (2005), 102--115.

\bibitem{GuttelTisseur2017}
Stefan G{\"u}ttel and Fran{\c{c}}oise Tisseur, \emph{The nonlinear eigenvalue
  problem}, Acta Numerica \textbf{26} (2017), 1--94.

\bibitem{Izosimov2015}
Anton Izosimov, \emph{Matrix polynomials, generalized {Jacobians}, and
  graphical zonotopes}, arXiv:1506.05179, 2015.

\bibitem{Kochubei2011}
Anatoly~N. Kochubei, \emph{General fractional calculus, evolution equations,
  and renewal processes}, Integral Equations and Operator Theory \textbf{71}
  (2011), no.~4, 583--600.

\bibitem{KollarMiller2014}
Richard Koll{\'a}r and Peter~D. Miller, \emph{Graphical {Krein} signature
  theory and {Evans--Krein} functions}, SIAM Review \textbf{56} (2014), no.~1,
  73--123.

\bibitem{Liu2026LocalHP}
Kejun Liu, \emph{A local {Hilbert--P{\'o}lya} realisation for elliptic curve
  {$L$}-functions}, arXiv:2605.17645v2, 2026.

\bibitem{MaldacenaStanford2016}
Juan Maldacena and Douglas Stanford, \emph{Remarks on the {Sachdev--Ye--Kitaev}
  model}, Physical Review D \textbf{94} (2016), no.~10, 106002.

\bibitem{MuellerPink2022}
Nicolas M{\"u}ller and Richard Pink, \emph{Hyperelliptic curves with many
  automorphisms}, International Journal of Number Theory \textbf{18} (2022),
  no.~4, 913--930.

\bibitem{Shaska2003}
Tanush Shaska, \emph{Determining the automorphism group of a hyperelliptic
  curve}, Proceedings of the 2003 International Symposium on Symbolic and
  Algebraic Computation, ACM, 2003, pp.~248--254.

\bibitem{Silverman2009}
Joseph~H. Silverman, \emph{The arithmetic of elliptic curves}, 2 ed., Graduate
  Texts in Mathematics, no. 106, Springer, New York, 2009.

\bibitem{LMFDB256b2}
{The LMFDB Collaboration}, \emph{Elliptic curve {256.b2}},
  \url{https://www.lmfdb.org/EllipticCurve/Q/256/b/2}, 2026, Accessed July 30,
  2026.

\bibitem{SageMath}
{The Sage Developers}, \emph{{SageMath}, the sage mathematics software system},
  2026, Version 10.9.

\bibitem{Vivolo2000}
Olivier Vivolo, \emph{Jacobians of singular spectral curves and completely
  integrable systems}, Proceedings of the Edinburgh Mathematical Society
  \textbf{43} (2000), 605--623.

\end{thebibliography}

\end{document}